	\definecolor{lightblue}{rgb}{.60,.60,1}
        \definecolor{brown}{rgb}{.75,.75,.75}
\title{Arithmetic complexity via effective names for random sequences}
\author{
	Bj{\o}rn Kjos-Hanssen\thanks{This material is based upon work supported by the National Science Foundation under Grants No.\ 0652669 and 0901020.} \\
		\textit{University of Hawai\textquoteleft i at M\=anoa} \\
		\url{bjoern@math.hawaii.edu} \and
	Frank Stephan\thanks{F. Stephan is partially supported by
NUS grant R252-000-420-112.} \\
		\textit{National University of Singapore} \\
		\url{fstephan@comp.nus.edu.sg} \and
	Jason Teutsch\thanks{J.~Teutsch is supported by
the Deutsche Forschungsgemeinschaft grant ME 1806/3-1.} \\
		\textit{Ruprecht-Karls-Universit\"at Heidelberg} \\
		\url{teutsch@math.uni-heidelberg.de}
}
\newcommand{\mc}{\mathcal}
\newcommand{\A}{\mc{A}}
\newcommand{\B}{\mc{B}}
\newcommand{\C}{\mc{C}}
\newcommand{\phe}{\varphi}
\newcommand{\0}{\emptyset}
\newcommand{\T}{\mathrm{T}}
\newcommand{\m}{\mathrm{m}}
\newcommand{\lex}{\mathrm{lex}}
\newcommand{\andd}{\quad \& \quad}
\newcommand{\join}{\mathrel{\oplus}}
\newcommand{\intersect}{\mathrel{\cap}}
\newcommand{\union}{\mathrel{\cup}}
\DeclareMathOperator{\set}{set}
\DeclareMathOperator{\real}{real}
\renewcommand{\complement}[1]{\overline{#1}}
\newcommand{\restr}{\upharpoonright}
\newcommand{\converge}{\mathop\downarrow}
\newcommand{\diverge}{\mathop\uparrow}
\DeclareMathOperator{\dom}{dom}
\newcommand{\grow}{_\lrcorner}
\newcommand{\lcode}{\left\langle}
\newcommand{\rcode}{\right\rangle}
\newcommand{\pair}[1]{{\lcode#1\rcode}}
\newcommand{\size}[1]{{\left|#1\right|}}
\theoremstyle{plain}        	\newtheorem{thm}{Theorem}[section]
\theoremstyle{definition} 	\newtheorem{defn}[thm]{Definition}
\theoremstyle{plain}        	\newtheorem{prop}[thm]{Proposition}
\theoremstyle{plain}        	
\theoremstyle{plain}        	\newtheorem{cor}[thm]{Corollary}
\theoremstyle{plain}        	\newtheorem{fact}[thm]{Fact}
\theoremstyle{plain}        	\newtheorem{lemma}[thm]{Lemma}
\theoremstyle{remark}  	
\theoremstyle{remark}  	\newtheorem{rem}[thm]{Remark}
\theoremstyle{plain}      	\newtheorem{ques}[thm]{Question}
\theoremstyle{plain}	\newtheorem*{sigma3repthm}{$\Sigma_3$-Representation Theorem}
\newcommand{\nlb}{\nolinebreak[3]}
\def\noqed{\renewcommand{\qedsymbol}{}}
\numberwithin{equation}{section}
\begin{document}
\maketitle

\begin{abstract}
We investigate enumerability properties for classes of sets which permit recursive, lexicographically increasing approximations, or \emph{left-r.e.} sets.  In addition to pinpointing the complexity of left-r.e.\ Martin-L\"{o}f, computably, Schnorr, and Kurtz random sets, weakly 1-generics and their complementary classes, we find that there exist characterizations of the third and fourth levels of the arithmetic hierarchy purely in terms of these notions.  More generally, there exists an equivalence between arithmetic complexity and existence of numberings for classes of left-r.e.\ sets with shift-persistent elements.  While some classes (such as Martin-L\"{o}f randoms and Kurtz non-randoms) have left-r.e.\ numberings, there is no canonical, or \emph{acceptable}, left-r.e.\ numbering for any class of left-r.e.\ randoms.  Finally, we note some fundamental differences between left-r.e.\ numberings for sets and reals.
\end{abstract}

\section{Effective randomness}

Think of a real number between $0$ and $1$.  Is it random?  In order to give a meaningful answer to this question, one must first obtain an expression for the real number in mind.  Any reasonable language contains no more than countably many expressions, and therefore we must always settle for a language with uncountably many indescribable reals.  On the other hand, there exists a natural and robust class of real numbers which admit recursive increasing approximations.  We call such numbers \emph{left-r.e.}\ reals.  Brodhead and Kjos-Hanssen \citep{BK09} observed that there exists an effective enumeration, or \emph{numbering}, of the left-r.e.\ reals, and Chaitin \citep{Cha87} showed that some left-r.e.\ reals are Martin-L\"of random.  Random left-r.e.\ reals thus serve as a friction point between definability and pure randomness.

In the following exposition we examine which classes of left-r.e.\ randoms and non-randoms admit numberings (and are therefore describable). A related definability question also arises, namely \emph{how {difficult} is it to determine whether a real is random?}  As a means of classifying complexity, we place the index sets for left-r.e.\ randoms inside the arithmetic hierarchy.  One can view this program as a continuation of work by Hitchcock, Lutz, and Terwijn \citep{HLT07} which places classes of randoms inside the broader Borel hierarchy.  In contrast with the case of r.e.\ sets, we shall find a close connection between numberings and arithmetic complexity for classes of left-r.e.\ reals.

\paragraph{Notation.} Some standard notation used in this article includes \emph{$\forall^\infty$} which denotes ``for all but finitely many'' and $\exists^\infty$ which means ``there exist infinitely many.''  $X \restr n$ is the length $n$ prefix of $X$, and \emph{$^\frown$} denotes concatenation.   For finite sequences $\sigma$ and $\tau$, $\sigma \preceq \tau $ means that $\sigma$ is a prefix of $\tau$, $\sigma \prec \tau$ indicates that $\sigma$ is a proper prefix of $\tau$, and $\size{\sigma}$ is the length of $\sigma$.  For non-negative integers $x$, $\size{x}$ is the floor of $\log (x+1)$.  $\pair{\cdot,\cdot}: \omega \times \omega \mapsto \omega$ is some recursive pairing function which we fix for rest of the paper.  For sets $A$ and $B$, $A \join B = \{2n : n \in A\} \union \{2n+1 : n \in B\}$.  $'$ is the jump operator,  $\mu$ is the unbounded search operator, $\converge$ denotes convergence, and $A \leq_\T B$ means $A$ Turing reduces to $B$. As usual, $\0'$ denotes the halting set, and $\complement{A}$ denotes the complement of the set $A$. For further background on recursion theory and algorithmic randomness, see \citep{Soa87} and \citep{DH10}.

A \emph{sequence} is the characteristic function of a set of natural numbers, and each sequence $A$ corresponds to a unique real number 
\[
	\real(A) = \sum_n 2^{-n-1} \cdot A(n).
\]  
We denote the class of all sequences by $\{0,1\}^\omega$, and $\{0,1\}^*$ is the class of finite strings.  A partial recursive function (synonymously, a \emph{machine}) $M$ is said to be \emph{prefix-free} if for any finite strings $\sigma, \tau \in \dom M$, $\sigma$ is not a proper prefix of $\tau$.  The \emph{prefix-free complexity} of a string $\sigma$ with respect to a prefix-free machine $M$ is given by $K_M(\sigma) = \min \{\size{p} : M(p) = \sigma\}$.  Furthermore, there exists a \emph{universal} prefix-free machine $U$ such that for any prefix-free machine $M$, $K_U(\sigma) \leq K_M(\sigma) + O(1)$ for all $\sigma \in \{0,1\}^*$ \citep{LV08}.  We fix such a $U$ and let $K = K_U$ for the remainder of this exposition.

\begin{defn}\label{mlr}
A sequence $X$ is called \emph{Martin-L\"{o}f random} \citep{Lev74, ML66} if
\begin{equation} \label{eqn: defn ML random}
	(\exists c)\: (\forall n)\: [K(X \restr n) \geq n - c].
\end{equation}
\end{defn}

\noindent
Intuitively, every prefix of the string $X$ in \eqref{eqn: defn ML random} is incompressible and therefore admits no simple description.

A \emph{martingale} $M:\{0,1\}^* \to \mathbb{R} \intersect [0,\infty)$ is a function satisfying the {fairness condition}: for all $\sigma\in 2^{<\omega}$,
\[
	M(\sigma) = \frac{M(\sigma 0) + M(\sigma 1)}{2}.
\]
The martingale $M$ \emph{succeeds} on a sequence $X$ if $\lim\sup M(X \restr n) = \infty$.  If  $M$ succeeds on $X$ and there exists a recursive, non-decreasing, unbounded function $g$ satisfying $g(n) \leq M(X \restr n)$ for infinitely many $n$, we say that $M$ \emph{Schnorr-succeeds} on $X$.  A martingale $M$ \emph{Kurtz-succeeds} on a set $A$ if $M$ succeeds on $A$ and there exists a recursive, non-decreasing, unbounded function $f$ such that $M(A \restr n) > f(n)$ for all $n$.  The idea behind Definition~\ref{cr} is that no gambling strategy can achieves arbitrary wealth by betting on a random sequence.

\begin{defn}\label{cr}
A sequence $X$ is called \emph{computably random} \citep{Sch71a,Sch71b} if no recursive martingale succeeds on $X$, \emph{Schnorr random} \citep{Sch71b} if no recursive martingale Schnorr-succeeds on $X$, and \emph{Kurtz random} \cite{DGR04, Kur81, Wan96} if no recursive martingale Kurtz-succeeds  on $X$.
\end{defn}

\noindent
The classes of randoms mentioned above relate to each other as follows:
\begin{thm}[see \citep{DH10} or \citep{N09}] \label{ML implies computable implies Schnorr}
Martin-L\"{o}f randomness $\implies$ computable randomness $\implies$ Schnorr randomness $\implies$ Kurtz randomness.
\end{thm}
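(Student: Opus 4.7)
The plan is to dispatch the three implications separately. The middle one, computable $\implies$ Schnorr, is a tautology: any recursive martingale that Schnorr-succeeds on $X$ already succeeds on $X$, so if no recursive martingale succeeds on $X$ then none Schnorr-succeeds, and a single line disposes of it.

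For Martin-L\"of $\implies$ computable I would carry out Schnorr's translation from martingales to tests. Given a recursive martingale $M$ with $M(\varepsilon) \leq 1$ succeeding on $X$, set $U_c = \{Y : \sup_n M(Y\restr n) > 2^c\}$. Each $U_c$ is uniformly $\Sigma^0_1$, and Doob's maximal inequality, a short induction from the fairness condition, yields $\mu(U_c) \leq 2^{-c}$. Thus $(U_c)_c$ is a Martin-L\"of test with $X \in \bigcap_c U_c$, contradicting Martin-L\"of randomness of $X$.

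For Schnorr $\implies$ Kurtz I would argue the contrapositive. Suppose $X$ lies in a null $\Pi^0_1$ class $\C = [T]$ for some recursive tree $T$, and set $C_n = \bigcup_{\sigma \in T,\, |\sigma| = n} [\sigma]$. These clopen sets descend to $\C$, and their measures $\mu(C_n)$ form a recursive nonincreasing sequence with limit $0$, so I can recursively compute a function $f$ satisfying $\mu(C_{f(k)}) \leq 2^{-k}$. Each $M_k(\sigma) = 2^{|\sigma|}\mu(C_{f(k)} \cap [\sigma])$ is a recursive martingale bounded by $1$, with $M_k(\varepsilon) \leq 2^{-k}$, and once $|\sigma| \geq f(k)$ with $\sigma \in C_{f(k)}$ the value $M_k$ is pinned at $1$ on every extension of $\sigma$. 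Putting $M = \sum_{k=0}^{\infty} 2^{k/2} M_k$ and $g(n) = 2^{h(n)/2}$ with $h(n) = \max\{k : f(k) \leq n\}$, one checks that $M$ is a finite recursive martingale (since $M(\varepsilon) \leq \sum_k 2^{-k/2} < \infty$ and the tail is effectively negligible), and for $X \in \C$ we have $g(n) \leq M(X\restr n)$ for all sufficiently large $n$, so $M$ Schnorr-succeeds on $X$.

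The one step that requires care is this last construction: choosing weights such as $2^{k/2}$ that keep $M$ finite and effectively approximable while still forcing $M(X\restr n)$ to grow along a recursive unbounded schedule $g$. The other two implications are, respectively, a packaged maximal-inequality calculation and a definitional observation.
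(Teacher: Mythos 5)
The paper gives no proof of Theorem~\ref{ML implies computable implies Schnorr}; it is imported wholesale from \citet{N09} and \citep{DH10}, so there is no internal argument to compare against. Your proposal is the standard textbook proof and it is correct: the middle implication is, as you say, immediate from Definition~\ref{cr}; the first is Ville's inequality packaged as a Martin-L\"of test; the third is the usual conditional-measure martingale attached to a null $\Pi^0_1$ class. Two points are worth making explicit. First, in the Martin-L\"of $\Rightarrow$ computable step you derive a contradiction with the \emph{test} characterization of Martin-L\"of randomness, whereas Definition~\ref{mlr} is stated via prefix-free complexity; so you are silently invoking the Levin--Schnorr equivalence of the two definitions, a classical but genuinely nontrivial extra ingredient relative to what the paper sets up. Second, in the Schnorr $\Rightarrow$ Kurtz step your $f$ is total because $\mu(C_n)\to 0$, and it should be taken nondecreasing (which the greedy search gives automatically, since the $\mu(C_n)$ are nonincreasing) so that $h(n)=\max\{k: f(k)\le n\}$ is recursive and unbounded; one should also dispose of the degenerate case where some $C_n$ is empty (then $[T]=\emptyset$ and there is nothing to prove), and round $2^{h(n)/2}$ down to get an integer-valued $g$. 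With those details supplied, all three implications go through exactly as you describe.
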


Our discussion will also involve a related class of sequences which we introduce in Definition~\ref{def:1-generic}.  A set of finite strings $S$ is called \emph{dense} if for every string $\sigma$ there exists $\tau \in S$ extending $\sigma$.
\begin{defn} \label{def:1-generic}
A sequence is \emph{weakly 1-generic} if it has a prefix in every dense r.e.\ set of strings.   Even stronger, a sequence $X$ is \emph{1-generic} if for every r.e.\ set of strings~$A$, either~$X$ has a prefix in~$A$ or there exists a prefix of~$X$ which has no extension in~$A$.  
\end{defn}
\noindent
While a left-r.e.\ real cannot be 1-generic \cite[Proposition~XI.2.3]{Odi99}, weakly 1-generic sets can be left-r.e. \cite{JST11, N09, ST10b}.  We shall make use of the following result of Kurtz which also appears in \cite[Theorem~8.11.7]{DH10}.
\begin{thm}[\citet{Kur81}] \label{thm: 1-generic implies Kurtz random}
Every weakly 1-generic is Kurtz random.
\end{thm}

\section{Sets, reals, and acceptable numberings} \label{sec: acceptable}
We turn our attention to the magical correspondence between binary sequences and reals in $[0,1]$.  In particular, each binary sequence is the binary expansion of some real number and vice-versa.  We call a real \emph{non-dyadic} if its binary expansion contains both infinitely many 1's and infinitely many 0's, and \emph{dyadic} otherwise.  This definition highlights an important distinction between sets and reals.  For any string $\sigma$, the real number $.\sigma 011111\dotsc$ equals $.\sigma100000\dotsc$.  Hence there is no difference between the set of ``finite'' reals and the set of ``co-finite'' reals.  For the same reason, and unlike the case for sequences, there is no difference between ``infinite'' and ``co-infinite'' reals.  We shall use $\real({A})$ to denote the unique real representation of a set $A$ and $\set({X})$ to denote an arbitrarily selected set representation of a real $X$.

In general, enumerability will depend on whether we view our objects of study as sequences or as reals, see Remark~\ref{rem: sets are not reals}.  Indeed, sequence enumerations are more restrictive than real enumerations.  For random objects, however, the choice of sequences versus reals is immaterial since random reals are non-dyadic.  Every random real corresponds to a unique random sequence (which in turn corresponds uniquely to the characteristic function of a set) and vice-versa.  The identification of finite and co-finite sets leads to ambiguity in terminology and reference, hence we favor sets over reals throughout this exposition.  Nevertheless, we keep in mind the correspondence between sets and reals and occasionally exploit their relationship.  Where the discussion does not benefit from distinction between random reals, random sets, or random sequences, we may simply refer to objects as \emph{randoms}.

A set $A$ is called \emph{left-r.e.}\footnote{Our definition is analogous to the usual definition of \emph{left-r.e.}\ for reals which requires that the real admits a recursive approximation from below.  In more detail, a real number $X \in [0,1]$ is called \emph{left-r.e.}\ if it can be written in the form
\[
X = \sum_{x \in \dom \phe} 2^{-\size{x}}.
\]
for some numbering $\phe$.}\ if there exists a uniformly recursive approximation $A_0, A_1, A_2, \dotsc$ to $A$ such that $A_s \leq_\lex A_{s+1}$ for all $s$.  Here \emph{$A_s \leq_\lex A_{s+1}$} means that either $A_{s+1} = A_s$ or the least element $x$ of the symmetric difference satisfies $x \in A_{s+1}$. Left-r.e.\ sets are limit-recursive sets with recursive approximations of a special form.  We call $A_0, A_1, A_2, \dotsc$ a \emph{left-r.e.\ approximation of $A$}.   Every r.e.\ set is left-r.e.\ as $A_s \subseteq A_{s+1}$ implies $A_s \leq_\lex A_{s+1}$. Zvonkin and Levin \cite{ZL70} and later Chaitin \citep{Cha87} showed that there exists a left-r.e.\ Martin-L\"{o}f random set. (Like Chaitin we will fix one and call it $\Omega$.) It follows that each of the classes in Theorem~\ref{ML implies computable implies Schnorr} contains a left-r.e.\ member.

A \emph{numbering} $\phe$ is a partial-recursive (p.r.) function $\pair{e,x} \mapsto \phe_e(x)$.  A numbering $\phe$ is a programming language, and $\phe_e$ is the $e^\text{th}$ program in that language.  While $\phe$ enumerates p.r.\ functions, our main focus in this paper will be enumerations of sets and reals which admit recursive approximations from below.
\begin{defn}
Let $\C$ be a class of left-r.e.\ sets\footnote{\label{foot: real numbering} For reals, the definition of left-r.e.\ numbering would be similar but, as we see from Remark~\ref{rem: sets are not reals}, not equivalent.  A \emph{left-r.e.\ numbering} of a class of left-r.e.\ reals $\C$ is a function with range $\C$ given by
\begin{equation} \label{def:left-r.e. numbering}
e \mapsto \sum_{\sigma \in \dom \phe_e} 2^{-\size{\sigma}}
\end{equation}
for some numbering $\phe$.}.  A \emph{left-r.e.\ numbering} $\alpha$ of $\C$ is a p.r.\ function from natural numbers to $\C$ given by
\[
e \mapsto \lim_{s \to \infty} \alpha_{e,s} = \alpha_e
\]
where:
\begin{enumerate}[\scshape (i)]
\item $\alpha_{e,s}$ is uniformly recursive in $e$ and $s$, and
\item $\alpha_{e,0}, \alpha_{e,1}, \alpha_{e,2}, \dotsc$ is a left-r.e.\ recursive approximation of $\alpha_e$.
\end{enumerate}
\end{defn}

The following definition is a terse review of the arithmetic hierarchy.  For a more in-depth discussion see \cite{Soa87}.  A set $A\subseteq\omega$ is a called a \emph{$\Sigma_n$ set} if it is $\Sigma^{0}_{n}$ in the usual sense of recursion theory. The complement of a $\Sigma_n$ set is a $\Pi_n$ set.  We say that a set $A$ \emph{many-to-one} reduces to a set $B$, or \emph{$A \leq_\m B$}, if there exists a recursive function $f$ such that for all $x$, $x \in A \iff f(x) \in B$.  A set $A$ is called \emph{$\Sigma_n$-hard} (resp.\ \emph{$\Pi_n$-hard}) if for every $\Sigma_n$ (resp.\ $\Pi_n$) set $X$, $X \leq_\m A$.  A set $A$ is \emph{$\Sigma_n$} (resp.\ \emph{$\Pi_n$}) \emph{complete} if $A$ is a $\Sigma_n$ (resp.\ $\Pi_n$) set and $A$ is $\Sigma_n$-hard (resp. $\Pi_n$-hard).  The \emph{index set} for a class $\C$ with respect to a (left-r.e.) numbering $\alpha$ is $\{e : \alpha_e \in \C\}$.

We make use of the following classical theorem, and we will prove an analogue for left-r.e.\ index sets in Theorem~\ref{Sigma_3 iff numbering}.

\begin{sigma3repthm}[\citep{Soa87}] \label{thm: Sigma_3 representation}
Let $W_0, W_1, W_2, \dotsc$ be an acceptable universal r.e.\ numbering, and let $A$ be a $\Sigma_3$-set.  Then there exists a recursive function $f$ such that for all $x$,
\begin{align*}
x \in A &\implies (\forall^\infty y)\: [W_{f(x,y)} = \omega]; \\
x \notin A &\implies (\forall y)\: [\text{$W_{f(x,y)}$ is finite}].
\end{align*}
\end{sigma3repthm}
\noindent

A left-r.e.\ numbering of all left-r.e.\ sets is called \emph{universal}.  Similarly, an \emph{r.e.\ numbering} of a class or r.e.\ sets is a mapping $e \mapsto \dom \phe_e$ for some numbering $\phe$, and an r.e.\ numbering is \emph{universal} if every r.e.\ set appears in its range.  Universal r.e.\ numberings are known to exist, see \cite[Definition~4.1]{Soa87}.  Universal left-r.e.\ numberings also exist \citep{BK09}: if $\phe_e$ induces a universal r.e.\ numbering, then $\phe_e$ induces a universal left-r.e.\ numbering.  

We shall use capital letters to denote sequences and sets, but we reserve the capital letter $W$ for r.e.\ numberings. Greek letters $\sigma$ and $\tau$ will denote finite binary strings, $\phe$ and $\psi$ will denote numberings, and $\alpha$, $\beta$, $\gamma$, and $\zeta$ will be left-r.e.\ numberings (with an exception in Theorem~\ref{thm: a set}).

The following result illustrates a crucial difference between left-r.e.\ reals and left-r.e.\ sets:
\begin{prop} \label{prop: bummer}
The co-infinite left-r.e.\ sets do not have a left-r.e.\ numbering.
\end{prop}
\begin{proof}
Suppose that such a numbering $\alpha$ exists, let $W_0, W_1, W_2, \dotsc$ be a universal r.e.\ numbering with $W_{d,0}, W_{d,1}, W_{d,2}, \dotsc$ a recursive approximation of $W_d$.  Then $W_d$ is co-infinite if and only if $W_d = \alpha_e$ for some $e$, that is:
\[
(\exists e)\: (\forall s,x)\: (\exists t>s)\: [\alpha_{e,t}(x) = W_{d,t}(x)].
\]
Thus $\{d : \text{$W_d$ is co-infinite}\}$ is $\Sigma_3$, contradicting the fact that this set is also $\Pi_3$-complete \cite[Corollary~3.5]{Soa87}.
\end{proof}
\begin{rem} \label{rem: sets are not reals}
On the other hand, every real belongs to the equivalence class of some co-infinite set because every dyadic rational can be represented using infinitely many 0's and finitely many 1's, and every non-dyadic rational can be represented using infinitely many 0's and infinitely many 1's. Since there exists a left-r.e.\ numbering for the class of left-r.e.\ reals \cite{BK09}, the co-infinite left-r.e.\ reals have a left-r.e.\ numbering in contrast to the corresponding result for sets (Proposition~\ref{prop: bummer}).
\end{rem}

Theorem~\ref{thm: a set} more precisely describe the relationship between enumerations of left-r.e.\ sets and left-r.e.\ reals.  A [left-r.e.\ or r.e.]~numbering is called a [left-r.e.\ or r.e.]~\emph{one-one} numbering or left-r.e.\ \emph{Friedberg} numbering if every member in its range has a unique index.
\begin{thm} \label{thm: a set}
A set $\C$ of nonzero reals between $0$ and $1$ has a left-r.e.\ numbering~$\alpha$ (in the sense of Footnote~\ref{foot: real numbering}) iff the class of sets
\[
\{A\colon \text{$A$ is infinite and $\real(A) \in \C$}\}
\]
has a left-r.e.\ numbering.  The same holds for left-r.e.\  one-one numberings.
\end{thm}
\begin{proof}
$\Longrightarrow$: Let $\alpha_0,\alpha_1,\ldots$ be a (one-one) enumeration
with dyadic approximations $\alpha_{e,s}$ to $\alpha$, let
\[
A_{e,s} = \set[(1-3^{-s}) \cdot \alpha_{e,s}],
\]
and let $A_e = \lim_s A_{e,s}.$  Since $\real(A_e) = \alpha_e$ for all $e$, it remains only to show that $A_e$ is infinite.  If $\alpha_e$ is non-dyadic, then $A_e$ is the unique infinite set with $\real(A_e) = \alpha_e$.  Otherwise $\alpha_e$ is dyadic, in which case all the sets $A_{e,s}$ are lexicographically less than $A_e$ and so $A_e$ is co-finite.  Finally, the numbering $A$ is one-one whenever the numbering $\alpha$ is.

$\Longleftarrow$: If $A_0, A_1, A_2, \dotsc$ is a list of infinite r.e.\ sets then the reals
\[
\alpha_{e,s} = \sum_{\{x<s \colon x \in A_{e,s}\}} 2^{-x-1} \cdot A_{e,s}(x)
\]
approximate uniformly in $e$ the numbers $\real(A_e)$ from below. Again
if the numbering $A$ is one-one then so is $\alpha$.
\end{proof}

Garden variety numberings in recursion theory satisfy the $s$-$m$-$n$~Theorem \citep{Soa87} and are called \emph{acceptable} numberings:
\begin{defn} \label{defn: acceptable numberings}
A (left-r.e.)~numbering $\phe$ is called a (left-r.e.) \emph{G\"{o}del numbering} or \emph{acceptable (left-r.e.)~numbering} if for every (left-r.e)~numbering $\psi$ there exists a recursive function $f$ such that $\phe_{f(e)} = \psi_e$ for all $e$.
\end{defn}

\noindent
Intuitively, the function $f$ in Definition~\ref{defn: acceptable numberings} translates code from program $\psi$ into program $\phe$.  Thus acceptable numberings are maximal: any given numbering can be uniformly translated into any acceptable one.  Furthermore, any two acceptable numberings are isomorphic in the sense of \citep{Rog58}.  These two properties make the notion of an acceptable numbering rather robust.  Moreover, the existence of an acceptable numbering is in a sense equivalent to Church's Thesis via the $s$-$m$-$n$~Theorem~\cite{Soa87}.

We show that there is no canonical way to number random sets via acceptable left-r.e.\ numberings.  The class of left-r.e.\ random reals is a natural example of a class which has a left-r.e.\ numbering but no maximal (i.e.\ acceptable) numbering.

\begin{defn}
Let $\C \subseteq \{0,1\}^\omega$.  A set $X$ is a \emph{shift-persistent element} of $\C$ if $\sigma{^\frown}X \in \C$ for every string $\sigma$.
\end{defn}

\begin{thm}
Assume that a family $\C$ has a shift-persistent element and there exists an infinite left-r.e.\ set $R <_\lex \omega$ with $R\not\in\C$.  Then $\C$ does not have an acceptable left-r.e.\ numbering.
\end{thm}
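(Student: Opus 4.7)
The plan is to argue by contradiction: assume $\alpha$ is an acceptable left-r.e.\ numbering of $\C$, fix a shift-persistent element $X\in\C$ and an infinite left-r.e.\ real $Y\in(0,1)$ with $Y\notin\C$, and build a left-r.e.\ numbering $\beta$ of $\C$ that no recursive function can translate into $\alpha$. By replacing $X$ with $0^\frown X$ we may assume $X<1/2$. Acceptability of $\alpha$ would then supply a recursive $f=\phe_{i_0}$ with $\alpha_{f(e)}=\beta_e$ for all $e$, and we will arrange that at a specific reserved index $e_{i_0}$ in $\beta$ one has $\beta_{e_{i_0}}\neq\alpha_{\phe_{i_0}(e_{i_0})}$, the desired contradiction.

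To build $\beta$, first use the left-r.e.\ approximation of $Y$ together with the density of the countable set $\{\sigma^\frown X : \sigma\in\{0,1\}^*\}$ in $[0,1]$ to fix a recursive sequence of finite strings $\sigma_0,\sigma_1,\dotsc$ for which $\sigma_k^\frown X$ is strictly increasing in $k$ with supremum $Y$; each $\sigma_k^\frown X$ lies in $\C$ by shift-persistence. Enumerate the p.r.\ functions $\phe_0,\phe_1,\dotsc$ as candidate translators and reserve disjoint indices $e_0,e_1,\dotsc$ in $\beta$, setting $\beta_e=X$ at all remaining indices. For each $i$, maintain a current level $k^{(s)}$, starting at $0$, with target $\sigma_{k^{(s)}}^\frown X$; at stage $s$, if $\phe_i(e_i)\converge$ by stage $s$ and the stage-$s$ approximations of $\alpha_{\phe_i(e_i)}$ and of the current target lie within $2^{-k^{(s)}}$ of each other, increment $k^{(s+1)}=k^{(s)}+1$. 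The stage-$s$ approximation of $\beta_{e_i}$ is the larger of its previous value and the current target's approximation, which is non-decreasing and so consistent with $\beta_{e_i}$ being left-r.e.

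The crux will be to show that for each $e_i$ only finitely many increments occur, so that $\beta_{e_i}$ stabilizes at some $\sigma_K^\frown X\in\C$. Were infinitely many to occur, the targets would converge to $Y$ and, passing to the limit in the trigger condition along the increment stages $s_k\to\infty$, one would deduce $\alpha_{\phe_i(e_i)}=Y$. But $\alpha_{\phe_i(e_i)}\in\C$ while $Y\notin\C$, so this cannot happen; hence the increments terminate at some final level $K$. At that final level the trigger never fires again, which forces $\alpha_{\phe_i(e_i)}\neq\sigma_K^\frown X=\beta_{e_i}$, since equality would eventually cause the approximations to agree within $2^{-K}$ and so trigger a further increment.

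With $\beta$ thus a left-r.e.\ numbering of $\C$ that diagonalizes against every recursive translator, acceptability supplies $f=\phe_{i_0}$ with $\alpha_{\phe_{i_0}(e_{i_0})}=\beta_{e_{i_0}}$, contradicting the construction at step $i_0$. The hard part will be the finiteness argument: at small stages the threshold $2^{-k^{(s)}}$ is loose and both approximations are still far from their limits, so one must schedule the trigger carefully to avoid spurious early firings while still catching every genuine near-equality. The hypothesis $Y\notin\C$ enters precisely here, ruling out the one scenario—namely $\alpha_{\phe_i(e_i)}=Y$—in which the trigger could persist through infinitely many levels.
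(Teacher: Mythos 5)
Your proposal is correct in its essentials, but it reaches the conclusion by a genuinely different route than the paper. The paper's proof is a growth-rate argument: it builds a single uniform sequence $\beta_n=\sigma_s{}^\frown X$ designed so that $\beta_n$ agrees with the excluded real $R$ on its first $F(n)$ bits, where $F$ is a $0'$-recursive function (approximated via the limit lemma) chosen so that every $\alpha_k$ with $k\le c_\Omega(n)$ disagrees with $R$ within those bits; consequently any translator $f$ with $\alpha_{f(n)}=\beta_n$ must satisfy $f(n)>c_\Omega(n)$ and so cannot be recursive, since the convergence module $c_\Omega$ dominates all recursive functions. You instead diagonalize directly against each candidate translator $\phe_i$ at a reserved index $e_i$, using a ``moving target'' $\sigma_k{}^\frown X$ that creeps up toward $Y$ and advances only when $\alpha_{\phe_i(e_i)}$ appears to agree with the current target; the hypothesis $Y\notin\C$ kills the only bad scenario (infinitely many advances forcing $\alpha_{\phe_i(e_i)}=Y$), and at the terminal level the non-firing of the trigger certifies $\alpha_{\phe_i(e_i)}\ne\beta_{e_i}$. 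Your limit arguments check out: with the $\sigma_k$ chosen as in the paper (so that $\sigma_k 111\ldots<\sigma_{k+1}000\ldots<Y$, which is what your appeal to ``density'' should be replaced by --- you need the left-r.e.\ approximation of $Y$ to find the $\sigma_k$ effectively, not mere density), infinitely many trigger firings squeeze $\alpha_{\phe_i(e_i)}$ between quantities converging to $Y$ from both sides, and your worry about ``spurious early firings'' is unfounded, since any infinite sequence of firings, spurious or not, forces the limit to be $Y$. What each approach buys: yours is more elementary and self-contained (no $\Omega$, no $0'$-oracle, no limit lemma), at the cost of an explicit index-by-index diagonalization; the paper's is more uniform and yields the slightly stronger quantitative fact that any translator would have to dominate $c_\Omega$.
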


\begin{proof}
Let $X$ be a shift-persistent member of $\C$, let $R$ be the missed out infinite set with $R <_\lex \omega$, and let $\sigma_0,\sigma_1,\sigma_2,\ldots$ be a left-r.e.\ approximation of $R$ such that all $n$ satisfy $\sigma_n 1 1 1 1 \ldots <_\lex \sigma_{n+1} 0 0 0 0 \ldots <_\lex R$.  Every infinite left-r.e.\ set has such an approximation.  Suppose $\alpha$ is an acceptable left-r.e.\ numbering of $\C$.

Fix a left-r.e.\ approximation $\Omega_0, \Omega_1, \Omega_2, \dotsc$ for $\Omega$, and let $c_{\Omega}(n)$ be the first stage for which this approximation has settled on the first $n$ positions. Note that $c_{\Omega}$ dominates every recursive function, otherwise we would infinitely often have $K(\Omega \restr n) \leq \log n + k$ for some constant $k$.  Now there is a $\0'$-recursive function $F$ such that $F(n)$ is the first $m$ such that the first $m$ bits of $R$ differ from the first $m$ bits of every $\alpha_k$ with $k \leq c_{\Omega}(n)$.  This function $F$ has an approximation $F_s$ and now one takes the set $\beta_n = \sigma_s {^\frown} X$ for the first stage~$s$ such that for all $t \geq s$ it holds that $F_t(n) = F_s(n)$ and the first $F_s(n)$ bits of $\sigma_t$ exist and are equal to those
of $\sigma_s$. Note that this $\sigma_s$ can be found as the function values $F_t(n)$ converge to $F(n)$ and similarly the $\sigma_t$ converge to $R$.

Each set $\beta_n$ is in the list $\alpha_0,\alpha_1,\alpha_2,\ldots$ by definition of $X$. Furthermore, $\beta_n$ coincides with $R$ on its first $F(n)$ bits while every $\alpha_k$ with $k \leq c_{\Omega}(n)$ differs from $R$ on its first $F(n)$ bits. Hence $\beta_n \notin \{\alpha_0,\alpha_1,\ldots,\alpha_{c_{\Omega}(n)}\}$. It follows that there is no recursive function $f$ with $\beta_n = \alpha_{f(n)}$ for all $n$ as $c_{\Omega}$ would dominate $f$. Thus the numbering $\alpha$ cannot be an acceptable numbering of the left-r.e.\ sets of its type.
\end{proof}

\noindent
It follows that there is no canonical way to enumerate random reals:
\begin{cor}
There is no acceptable left-r.e.\ numbering of either the left-r.e.\ randoms or the left-r.e.\ non-randoms (under any reasonable definition of random).
\end{cor}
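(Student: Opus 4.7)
The plan is to deduce the corollary by applying Theorem~\ref{Boost} separately to the class of left-r.e.\ randoms and to the class of left-r.e.\ non-randoms.  For each class I need to verify the two hypotheses of the theorem: existence of a shift-persistent element, and failure to contain every infinite left-r.e.\ real in $(0,1)$.

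Before specializing, I would isolate two general facts that any ``reasonable'' notion of randomness is expected to satisfy: (a) randomness is invariant under modification of finitely many initial bits, so that $\sigma^\frown X$ is random whenever $X$ is; and (b) every computable real is non-random, because when $X$ is computable the singleton $\{X\}$ is a null $\Pi^0_1$-class, so $X$ fails even Kurtz randomness and, by Theorem~\ref{ML implies computable implies Schnorr}, all stronger notions.  Property (a) is easy to verify uniformly across the four notions introduced in \S1: for Martin-L\"of randomness via the bound $K(\sigma^\frown X\restr n) = K(X\restr(n-\size{\sigma})) + O(1)$, and for computable, Schnorr, and Kurtz randomness by shifting a witnessing martingale or $\Pi^0_1$-class by $\size{\sigma}$ bits.

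For the class of left-r.e.\ randoms, I would take the shift-persistent element to be Chaitin's $\Omega$, which is left-r.e.\ and Martin-L\"of random (hence random under any weaker reasonable definition), and I would take the missed infinite left-r.e.\ real to be any computable point with an infinite binary expansion in $(0,1)$, for example $1/3 = 0.010101\ldots_{2}$.  For the class of left-r.e.\ non-randoms, the roles reverse: $1/3$ itself is shift-persistent in the non-randoms, since every $\sigma^\frown(1/3)$ remains computable and hence non-random by (b); while $\Omega$ serves as the infinite left-r.e.\ real in $(0,1)$ that lies outside the class.  In each of the two applications both hypotheses of Theorem~\ref{Boost} are met, and the corollary follows at once.

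The only non-routine point is pinning down what ``reasonable definition of random'' should mean.  My main obstacle, if any, is simply to state this cleanly: it suffices to require (a) invariance under finite prefix modification and (b) exclusion of computable reals.  Both properties hold verbatim for Martin-L\"of, computable, Schnorr, and Kurtz randomness, so the corollary requires no further work beyond citing Theorem~\ref{Boost}.
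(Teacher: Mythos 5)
Your proposal is correct and is exactly the intended argument: the paper states this corollary without proof as an immediate consequence of Theorem~\ref{Boost}, and the intended instantiation is precisely yours --- $\Omega$ as the shift-persistent element and a computable infinite real such as $1/3$ as the missed real for the randoms, with the roles reversed for the non-randoms. Your two axioms for ``reasonable'' (invariance under finite prefix modification and exclusion of computable reals) are the right way to make the informal parenthetical precise, and both verifications are sound.
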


\section{Arithmetic classification via numberings} \label{sec: acvn}

Unlike r.e.\ numberings, the existence of left-r.e.\ numberings admits a neat characterization in terms of $\Sigma_3$ sets.  As a corollary, we will get that the left-r.e.\ Martin-L\"{o}f random reals are enumerable but not co-enumerable.  In order to make concatenation easier, we introduce the following operator on finite strings.
\begin{defn}
For any finite binary string $\sigma$, $\sigma\grow$ denotes the string $\sigma$ with the maximum 1 changed to a 0 (if it exists).  If $\sigma$ consists of all zeros, then $\sigma\grow = \sigma$.
\end{defn}

A refinement of the following result appears in \cite[Theorem~3.5.21]{N09} using an alternate proof.

\begin{lemma}[\citet{N09}] \label{lots of zeros implies nonrandom}
Let $X$ be a sequence which infinitely often has a prefix of length $n$ followed by $n \cdot 2^n$ zeros.  Then $X$ is not Schnorr random.
\end{lemma}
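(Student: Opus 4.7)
The plan is to construct a recursive martingale $M$ that Schnorr-succeeds on $X$, built directly from the guaranteed long runs of $1$'s. For each $n$, I would define a sub-martingale $M_n$ with initial capital $2^{-n-1}$ that stays constant (``doing nothing'') until position $n$, then bets its entire current capital on the next bit being $1$ for $(n+2)\cdot 2^n$ consecutive rounds, crashing to $0$ if a $0$ appears during this window and freezing at its final value otherwise. Set $M = \sum_n M_n$.

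First I would verify that $M$ is a recursive martingale of initial capital $1$. The fairness condition is immediate since sums of martingales are martingales, and $\sum_n 2^{-n-1} = 1$. For recursiveness, the key observation is that for $\sigma$ of length $L$ only $M_0,\ldots,M_L$ require genuine computation, while the tail contributes the closed-form value $\sum_{n > L} 2^{-n-1} = 2^{-L-1}$. Hence $M(\sigma)$ is a rational recursively computable from $\sigma$.

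Write $m_n = n + (n+2)\cdot 2^n$. If $X$ has a prefix of length $n$ followed by $(n+2)\cdot 2^n$ ones, then $M_n$ has doubled its wealth $(n+2)\cdot 2^n$ times by position $m_n$, giving
\[
M(X \restr m_n) \;\geq\; M_n(X \restr m_n) \;=\; 2^{(n+2)\cdot 2^n - n - 1}.
\]
Now let $g(m)$ be the largest $n$ with $m_n \leq m$, and $0$ if no such $n$ exists. Since $(m_n)$ is strictly increasing and recursive, $g$ is a non-decreasing, unbounded recursive function. At each $m_n$ where $X$ has the postulated run we have $g(m_n) = n \leq 2^{(n+2)\cdot 2^n - n - 1} \leq M(X\restr m_n)$, so $g(m) \leq M(X\restr m)$ for infinitely many $m$; thus $M$ Schnorr-succeeds on $X$, and $X$ is not Schnorr random.

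The only step requiring real care is the recursiveness of $M$, since it is defined as an infinite sum. The choice $c_n = 2^{-n-1}$ is what makes both the geometric tail uniformly computable and the initial capital finite; the specific constant $(n+2)\cdot 2^n$ in the hypothesis is in fact more than sufficient, since any summable sequence of initial capitals would still leave the eventual wealth enormously large compared to any recursive bound.
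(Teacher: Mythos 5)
Your proof is correct, but it is built differently from the paper's. The paper uses a single sequential ``slow and steady'' strategy: it always bets a tiny stake $2^{-n}$ (tied to the position of the last $0$ seen) on the outcome $1$, so that the cumulative losses over all time are bounded by a constant, while each qualifying run of $(n+2)\cdot 2^n$ ones yields an additive gain of about $n+2$ dollars; the capital therefore grows only linearly in $n$ along the run-ends, and the run length $(n+2)\cdot 2^n$ is exactly what is needed to beat the $2^{-n}$ stake. You instead decompose the gambler into countably many all-or-nothing sub-martingales $M_n$ with geometric seed capitals $2^{-n-1}$, each doubling through one potential run, and you sum them; the price is that you must check the infinite sum is still a recursive martingale (which your tail computation $\sum_{n>L}2^{-n-1}=2^{-L-1}$ handles cleanly), and the reward is an astronomically larger capital $2^{(n+2)\cdot 2^n-n-1}$ at the run-ends. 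As you note, your construction only needs the run lengths to exceed $n$ by enough to swamp the seed $2^{-n-1}$ (runs of length $2n$ would already do), so your argument proves a slightly more general statement, whereas the paper's additive strategy genuinely uses runs of order $n\cdot 2^n$. Both proofs finish the same way, with a recursive order function that guesses the positions $m_n$ where a run would end.
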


\begin{proof}
We exhibit a martingale which Schnorr-succeeds on $X$.  The betting strategy is as follows.  For simplicity, let us assume that we start with \$3.  For the initial bet, place \$1 on the ``1'' outcome.  Now suppose we have already seen a string $\sigma$ of length $n$.  If the last digit of $\sigma$ is ``0,'' then bet $2^{-n}$ dollars on the ``1'' outcome.  Otherwise, make the same bet that was made the last time.

We claim this martingale succeeds on $X$.  The martingale loses at most $2^{-n}$ dollars from betting on the $(n+1)^\text{st}$ digit of $X$.  Thus the total money lost from playing over an infinite amount of time is at most \$2.  On the other hand, we are bound to eventually reach a string of consecutive zeros of length $n\cdot 2^n$ immediately following $X \restr n$.  At this point, $2^{-n}$ dollars will be wagered $n\cdot 2^n$ times in a row, for a net gain of $\$n$ over the interval of zeros.  By assumption on $X$ we reach such points infinitely often, and therefore the winnings go to infinity.

Finally we exhibit a recursive function which infinitely often is a correct lower bound for the gambler's capital.  Define a recursive function which guesses at each position that we are at the end of an interval of $n\cdot 2^n$ zeros.  The function always outputs $n$ where $n$ is the length of the corresponding interval that would have preceded the long string of zeros.  if no such integer $n$ exists, then output 0.  Infinitely often this guess will be correct and, as noted in the previous paragraph, we will indeed have at least $n$ dollars at this point.
\end{proof}
\noindent
Since weakly 1-generic sets are Kurtz random (Theorem~\ref{thm: 1-generic implies Kurtz random}), Proposition~\ref{prop: 1-generic zeros} below implies that Lemma~\ref{lots of zeros implies nonrandom} does not carry over for Kurtz random sequences.
\begin{prop}\label{prop: 1-generic zeros}
Let $X$ be weakly 1-generic sequence and let $f$ be a recursive function.  Then for infinitely many $n$, $(X\restr n)^{\frown} 0^{f(n)}$ is a prefix of~$X$.
\end{prop}
\begin{proof}
Let
\[
A_n = \{\sigma^\frown 0^{f(\size{\sigma})} \colon \size{\sigma} \geq n\}.
\]
For all $n$, some member of $A_n$ is a prefix of $X$ since $A_n$ is a dense r.e.\ set.  Suppose there are only finitely many prefixes of $X$ of the form $(X \restr n)^{\frown}0^{f(n)}$, and let $k$ be greater than the length of the longest such prefix.  Then some member of $A_k$ must also be a prefix of $X$, contradicting the definition of $k$.
\end{proof}

\begin{defn}
Let $\sigma_0, \sigma_1, \sigma_2, \dotsc$ be a sequence of strings where $\sigma_{e,s}$ is a stage~$s$ approximation of $\sigma_e$.  We will say that $\sigma_e$ \emph{blows up to infinity} if $\lim_s \size{\sigma_{e,s}} = \infty$, and $\sigma_e$ \emph{gets kicked to infinity} if $\sigma_j$ blows up to infinity for some $j < e$.
\end{defn}
\begin{thm} \label{thm: ML Sigma_3-hard}
Let $A \subseteq \omega$ be a $\Sigma_3$-set, and let $\alpha$ be an acceptable universal left-r.e.\ numbering.  Then there exist a recursive function $g$ such that
\begin{align*}
x \in A &\implies \text{$\alpha_{g(x)}$ is Martin-L\"{o}f random;} \\
x \notin A &\implies \text{$\alpha_{g(x)}$ is not Schnorr random.}
\end{align*}
\end{thm}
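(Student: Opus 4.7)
The strategy is to invoke Theorem~\ref{thm: Sigma_3 representation} to translate $x \in A$ into a uniform combinatorial condition on the r.e.\ sets $W_{f(x,y)}$, and then to construct, uniformly in $x$, a left-r.e.\ real $R_x$ whose randomness behaviour is controlled by that condition; acceptability of $\alpha$ will convert this uniform construction into the recursive function $g$ of the statement. By Theorem~\ref{thm: Sigma_3 representation} fix a recursive $f$ such that $x \in A \iff (\forall^{\infty} y)[W_{f(x,y)} = \omega]$ and $x \notin A \iff (\forall y)[W_{f(x,y)} \text{ is finite}]$. Choose block boundaries $0 = n_0 < n_1 < n_2 < \cdots$ recursively with widths $\ell_y := n_{y+1}-n_y \geq (n_y+2) \cdot 2^{n_y}$, matching the hypothesis of Lemma~\ref{lots of zeros implies nonrandom}.

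Let $\omega_0 \omega_1 \omega_2 \cdots$ denote the binary expansion of Chaitin's $\Omega$. The target bit pattern for $R_x$ is: in the block $B_y := [n_y, n_{y+1})$ the bits of $R_x$ should equal the corresponding bits $\omega_{n_y} \omega_{n_y+1} \cdots \omega_{n_{y+1}-1}$ of $\Omega$ whenever $W_{f(x,y)} = \omega$, and should equal $1^{\ell_y}$ whenever $W_{f(x,y)}$ is finite. Granting this pattern both implications are immediate. For $x \in A$ the real $R_x$ differs from $\Omega$ in only the finitely many bit positions inside the at-most-finitely-many blocks where $W_{f(x,y)}$ is finite, so $R_x$ is Martin-L\"of random (changing finitely many bits of an ML-random real preserves ML-randomness). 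For $x \notin A$ every block is all ones, so the prefix $R_x \restr n_y$ is followed by $\ell_y \geq (n_y+2) \cdot 2^{n_y}$ consecutive ones for every $y$, and Lemma~\ref{lots of zeros implies nonrandom} places $R_x$ outside Schnorr randomness.

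The technical core is synthesising this target as the limit of a genuine left-r.e.\ approximation. I would build $R_x^{(s)} \nearrow R_x$ by maintaining, for each block $B_y$, a non-decreasing integer value in $[0, 2^{\ell_y})$ that eventually attains either the targeted $\Omega$-bits or $2^{\ell_y}-1$. The block value at stage $s$ is taken to be the running maximum of (i) the integer encoded by the stage-$s$ approximation $\Omega_s$ on positions $B_y$, suitably adjusted to remain monotone across carries in $\Omega_s$, and (ii) a padding $\mu_{y,s}$ driven upward by a dovetailed clock that ticks only when $\Omega_s$ gains new precision within $B_y$ and $W_{f(x,y)}$ has enumerated no new element since the previous tick. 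When $W_{f(x,y)}$ is finite, its enumeration eventually ceases, the clock ticks unimpeded, and $\mu_{y,s}$ climbs to $2^{\ell_y}-1$, forcing block $y$ to all ones in the limit.

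The principal obstacle is verifying the $W_{f(x,y)} = \omega$ case: the padding $\mu_{y,s}$ must remain dominated in the limit by the integer coded by the actual bits $\omega_{n_y} \cdots \omega_{n_{y+1}-1}$, so that block $y$ converges to $\Omega$'s bits rather than being contaminated by ones. This is where one exploits that the convergence modulus of $\Omega$ dominates every total recursive function: any schedule under which $\mu_{y,s}$ could overtake the $\Omega$-bits would furnish a total recursive function dominating $c_\Omega$, a contradiction. With the $R_x$'s so constructed, $x \mapsto R_x$ is a left-r.e.\ numbering, and acceptability of $\alpha$ (Definition~\ref{defn: acceptable numberings}) supplies a recursive $g$ satisfying $\alpha_{g(x)} = R_x$ for every $x$.
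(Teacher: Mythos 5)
Your high-level skeleton matches the paper's: apply the $\Sigma_3$-Representation Theorem, build blocks of $1$'s long enough to trigger Lemma~\ref{lots of zeros implies nonrandom} in the negative case, use $\Omega$ to supply Martin-L\"of randomness in the positive case, and invoke acceptability at the end to get $g$. But the step you defer as ``the technical core'' is exactly where the argument breaks, and the mechanism you sketch for it does not work. Your target real places $\Omega$'s bits at their \emph{original, fixed} positions in all but finitely many predetermined blocks. The obstruction is that the map ``force a fixed set of positions to $1$, copy $\Omega$ elsewhere'' is not monotone with respect to the real ordering: when $\Omega_s$ increases by a carry that crosses a block boundary (e.g.\ $0.0111\ldots \to 0.1000\ldots$), the integer coded by a later block can strictly \emph{decrease} even though the real increases, and dually the bits of $\Omega_s$ on a block can strictly exceed the true bits of $\Omega$ there while $\Omega_s \le \Omega$. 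Your fix --- taking a per-block running maximum --- restores monotonicity but destroys the limit: once a block's running maximum overshoots $\Omega$'s actual bits on that block it can never come back down, so the block need not converge to $\Omega$'s bits. Since this can happen in infinitely many blocks, the $x \in A$ case loses its ``finitely many bit changes'' guarantee and with it the Martin-L\"of randomness of $R_x$. The domination-of-$c_\Omega$ argument you invoke does not repair this, because the overshoot is caused by $\Omega$'s own approximation (carries), not by the padding $\mu_{y,s}$; and the padding's clock is driven by the enumeration of $W_{f(x,y)}$, which is not a total recursive schedule, so no recursive function dominating $c_\Omega$ is actually extracted.

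The paper sidesteps this entirely by choosing a different target: in the positive case $\alpha_{g(x)}$ is a \emph{finite prefix followed by a fresh copy of $\Omega$ from position $0$}, attached at a dynamically determined (not fixed) position, which is Martin-L\"of random because randomness is preserved under adding a finite prefix. The blocks $\sigma^x_{n,s}$ have growing lengths, each stores $\bigl(\Omega_s \restr \size{W_{f(x,n),\cdot}}\bigr)\grow$ --- the approximation with its last $1$ turned to a $0$ --- so that every update is realized by flipping that $0$ back to a $1$ and rewriting everything to its right. That is the standard left-r.e.\ move, and it is what makes the approximation genuinely monotone while still converging to the intended limit. To salvage your write-up you would need to abandon the fixed-position-agreement-with-$\Omega$ target in favour of something like the paper's concatenation-with-reset device (or supply a genuinely new argument that your target is uniformly left-r.e., which the running-maximum construction does not provide).
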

\begin{proof}
Let $W$ be an acceptable universal r.e.\ numbering.  Without loss of generality, assume that for all $e$ at most one element of $e$ enters $W_e$ at each stage of its enumeration $\{W_{e,s}\}$ and furthermore at least one $W_e$ increases at each stage.  By the $\Sigma_3$-Representation~Theorem, there exists a function $f$ satisfying:
\begin{align*}
x \in A &\implies \text{$W_{f(x,n)}$ is infinite for some $n$;} \\
x \notin A &\implies \text{$W_{f(x,n)}$ is finite for all $n$.}
\end{align*}
For each $x$ and $s$, let
\[
	\sigma^x_{0,s} = \Omega_s \restr \size{W_{f(x,0),s}},
\]
let
\begin{multline*}
m(e,s) = \text{greatest stage $t + 1 < s$ such that} \\ \max \{x: \Omega_{e,t+1}(x) =1\} \neq \max \{x: \Omega_{e,t}(x) = 1\},
\end{multline*}
and inductively define
\begin{equation}
\sigma^x_{n+1,s}  = 1^{(|\sigma^x_{n,s}| + 2) \cdot 2^{|\sigma^x_{n,s}|}} {^\frown} \left(\Omega_s  \restr \size{W_{f(x,n+1),m[f(x,n+1),s]}}\right)\grow.
\end{equation}
Roughly speaking, $\sigma^x_{n+1,s}$ consists of a long string of 1's followed by an approximation of $\Omega$.  Define the recursive function $g$ by
\begin{equation} \label{eqn2: ML Sigma_3-hard}
\alpha_{g(x)} = \lim_s \sigma^x_{0,s} {^\frown} \sigma^x_{1,s} {^\frown} \sigma^x_{2,s} \dotsc
\end{equation}
By Lemma~\ref{lots of zeros implies nonrandom}, there are enough 1's that if all the $\sigma^x_n$'s remain finite, then \eqref{eqn2: ML Sigma_3-hard} is not Schnorr random.  On the other hand, if some $\sigma^x_n$ does blow up to infinity, then \eqref{eqn2: ML Sigma_3-hard} becomes the Martin-L\"{o}f random $\Omega$ with some finite prefix attached.

We verify that the approximation in \eqref{eqn2: ML Sigma_3-hard} is left-r.e.\ by analyzing the change between stages $s$ and $s+1$.   By induction, the length of $\sigma^x_{n,t}$ is increasing in $t$ for every $n$.  Let $e$ be the least index such that $\sigma^x_{e,s+1}$ is longer than $\sigma^x_{e,s}$.  By minimality, the prefix of 1's at the beginning of this string must remain unchanged but the approximation to $\Omega$ increases.   In particular,
\[
	\size{W_{f(x,e),m[f(x,e),s]}} \neq \size{W_{f(x,e),m[f(x,e),s+1]}}.
\]
Due to the $\grow$ operator, the 0 at some existing position changes to a 1 in stage $s+1$.  Hence $\sigma^x_e$ can expand in stage $s+1$ while permitting a left-r.e.\ approximation for $\eqref{eqn2: ML Sigma_3-hard}$.  Finally, the limit in $\eqref{eqn2: ML Sigma_3-hard}$ exists because the sequence of reals is increasing and bounded from above.

Suppose that $W_{f(x,n)}$ is infinite for some $n$, and let $e$ be the least such index.  By minimality, $\sigma^x_j = \lim_s \sigma^x_{j,s}$ is finite for all $j < e$.  Hence for $e > 0$,
\[
	\alpha_{g(x)} = \sigma^x_0 {^\frown} \sigma^x_1 {^\frown} \sigma^x_2 {^\frown} \dotsb {^\frown} 1^{(|\sigma^x_e| + 2) \cdot 2^{|\sigma^x_e|} } {^\frown} \Omega,
\]
which is Martin-L\"{o}f random.  All $\sigma^x_n$ with $n >e$ gets kicked to infinity.  The case $e=0$ is similar.

On the other hand, suppose that $W_{f(x,n)}$ is finite for all $n$.  In this case $\sigma^x_0$ is finite, and
\[
	\sigma^x_{n+1} = 1^{(|\sigma^x_n| + 2) \cdot 2^{|\sigma^x_n|} } {^\frown}   \left(\Omega_{s_{n}} \restr \size{W_{f(x,n+1),m[f(x,n+1),s_{n}]}}\right)\grow,
\]
where $s_n$ is the final stage where $W_{f(x,n+1)}$ increases.  Thus infinitely often $\alpha_{g(x)}$ has a prefix of length $\size{\sigma}$ followed by $(\size{\sigma}+2) \cdot 2^{\size{\sigma}}$ 1's.  By Lemma~\ref{lots of zeros implies nonrandom}, $\alpha_{g(x)}$ is not Schnorr random.
\end{proof}

\begin{cor} \label{cor: ML randoms are Sigma_3-hard}
In any acceptable universal left-r.e.\ numbering, the indices of the left-r.e.\ Martin-L\"{o}f randoms are $\Sigma_3$-hard.
\end{cor}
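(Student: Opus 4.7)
The plan is to obtain the corollary as an immediate consequence of Theorem~\ref{thm: ML Sigma_3-hard}, exploiting the implications recorded in Theorem~\ref{ML implies computable implies Schnorr}. Fix an acceptable universal left-r.e.\ numbering $\alpha$, and for each of the three classes $\C \in \{\text{ML random, computably random, Schnorr random}\}$ let $I_\C = \{e : \alpha_e \in \C\}$. To show $I_\C$ is $\Sigma_3$-hard we need, given an arbitrary $\Sigma_3$-set $A$, a recursive $g$ such that $x \in A \iff g(x) \in I_\C$.

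First I would apply Theorem~\ref{thm: ML Sigma_3-hard} to the given $A$ and $\alpha$, producing a recursive function $g$ with the two-sided property that $x \in A$ forces $\alpha_{g(x)}$ to be Martin-L\"of random, while $x \notin A$ forces $\alpha_{g(x)}$ to fail Schnorr randomness. The same function $g$ will serve for all three index sets simultaneously, because the chain of implications in Theorem~\ref{ML implies computable implies Schnorr} collapses the positive and negative conditions to the right levels:
\begin{itemize}
\item If $x \in A$, then $\alpha_{g(x)}$ is Martin-L\"of random, hence computably random, hence Schnorr random, so $g(x) \in I_\C$ for each of the three choices of $\C$.
\item If $x \notin A$, then $\alpha_{g(x)}$ is not Schnorr random, hence not computably random and not Martin-L\"of random either, so $g(x) \notin I_\C$ in all three cases.
\end{itemize}
Consequently $A \leq_\m I_\C$ via $g$ for each $\C$, which is exactly $\Sigma_3$-hardness.

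There is no real obstacle here since all the work was done in Theorem~\ref{thm: ML Sigma_3-hard}; the only point worth a line of comment is that the corollary's statement quantifies over \emph{all} acceptable universal left-r.e.\ numberings, but Theorem~\ref{thm: ML Sigma_3-hard} was stated for an arbitrary such $\alpha$, so no translation between numberings is needed. Thus the proof is a two-line deduction combining the theorem with Theorem~\ref{ML implies computable implies Schnorr}.
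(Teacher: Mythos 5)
Your proposal is correct and is exactly the intended derivation: the paper states the corollary without proof because it follows immediately from Theorem~\ref{thm: ML Sigma_3-hard} together with the implication chain of Theorem~\ref{ML implies computable implies Schnorr}, just as you argue. Your remark that the theorem already quantifies over an arbitrary acceptable universal left-r.e.\ numbering, so no translation is needed, is also the right observation.
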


Recall that a left-r.e.~numbering is called a left-r.e.~\emph{Friedberg} numbering if every member in its range has a unique index.  Friedberg initiated the study of these numberings in 1958 when he showed that the r.e.\ sets can be enumerated without repetition \citep{Fri58}.  More recently Kummer \citep{Kum90} gave a simplified proof of Friedberg's result, and Brodhead and Kjos-Hanssen \citep{BK09} adapted his idea to show that there exists a left-r.e.\ Friedberg numbering of the left-r.e.\ Martin-L\"{o}f random sets.  We now show that left-r.e.\ Friedberg numberings can be used to characterize $\Sigma_3$-index sets.

\begin{thm} \label{Sigma_3 iff numbering}
Let $\C$ be a class of infinite left-r.e.\ reals which contains a shift-persistent element.  Then for any universal left-r.e.\ numbering $\alpha$, the following are equivalent:
\begin{enumerate}[\scshape (i)]
\item $\{e : \alpha_e \in \C\}$ is a $\Sigma_3$-set.
\item There exists a left-r.e.\ numbering of $\C$.
\item There exists a left-r.e.\ Friedberg numbering of $\C$.
\end{enumerate}
\end{thm}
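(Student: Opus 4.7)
The plan is to establish the cycle (iii)$\Rightarrow$(ii)$\Rightarrow$(i)$\Rightarrow$(iii). The first implication is immediate since a Friedberg numbering is a fortiori a numbering. For (ii)$\Rightarrow$(i), let $\beta$ be a left-r.e.\ numbering of $\C$ with recursive rational approximations $\{\beta_{j,s}\}_s$ converging from below to $\beta_j$. The equality predicate on two left-r.e.\ reals is $\Pi_2$ in the indices: $\alpha_e \leq \beta_j$ iff $(\forall q \in \mathbb{Q})[q < \alpha_e \Rightarrow q \leq \beta_j]$, which unwinds to a $\Pi_2$ statement (since $q < \alpha_e$ is $\Sigma_1$ and $q \leq \beta_j$ is $\Pi_2$, the implication is $\Pi_2$, and the universal quantifier over rationals preserves this). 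Hence $\alpha_e \in \C$ iff $(\exists j)[\alpha_e = \beta_j]$ is $\Sigma_3$.

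For (i)$\Rightarrow$(iii), the main implication, apply the $\Sigma_3$-Representation Theorem~\ref{thm: Sigma_3 representation} to $A = \{e : \alpha_e \in \C\}$ to obtain a recursive $f$ with $e \in A \iff (\forall^\infty y)[W_{f(e,y)} = \omega]$. Fix a shift-persistent element $X \in \C$ together with a recursive approximation $\sigma_0, \sigma_1, \dotsc$ of the sort constructed in Theorem~\ref{Boost}, so that $\sigma_n 111\dotsc < \sigma_{n+1}000\dotsc$ and these rationals approach $X$. The plan is to build a left-r.e.\ Friedberg numbering $\zeta_0, \zeta_1, \dotsc$ of $\C$ stage by stage. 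Each slot $n = \pair{e,k}$ is allocated to the task of realizing $\alpha_e$ as its $k$th copy, steered by the $\Sigma_3$-guesses derived from $f(e,\cdot)$. While current guesses indicate $e \in A$, we copy the approximation of $\alpha_e$; whenever a guess collapses, we migrate $\zeta_n$ to a suitable $\rho {^\frown} X$ by the prefix-of-ones technique with the $\grow$ operator from the proof of Theorem~\ref{thm: ML Sigma_3-hard}, preserving left-recursive enumerability of the approximation. Shift-persistence of $X$ guarantees each fallback value lies in $\C$, and infinitely many distinct $\rho {^\frown} X$ are on tap as fresh fillers. Non-repetition is enforced by a Friedberg--Kummer priority argument: whenever two active approximations $\zeta_{n,s}$ and $\zeta_{m,s}$ with $n < m$ agree beyond their current committed depth, the lower-priority $\zeta_m$ is reassigned to a fresh $\rho {^\frown} X$ that strictly exceeds its current approximation.

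The crux of the difficulty is reconciling the monotonicity required of left-r.e.\ approximations with the dynamic collision avoidance demanded of a Friedberg numbering: once bits are committed, they cannot be retracted, so every corrective reassignment must push the underlying rational strictly upward. The $\grow$ operator together with the infinity of every element of $\C$ supplies exactly the needed flexibility---given any finite approximation we can always locate some $\rho {^\frown} X \in \C$ exceeding it---so the construction remains within the left-r.e.\ framework throughout. A routine verification of the priority ordering then yields both surjectivity of $\zeta$ onto $\C$ and the Friedberg uniqueness property.
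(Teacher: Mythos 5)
Your directions \textsc{(iii)}$\Rightarrow$\textsc{(ii)} and \textsc{(ii)}$\Rightarrow$\textsc{(i)} are fine and agree with the paper: equality of two left-r.e.\ reals is $\Pi_2$ in the indices, so membership is $\Sigma_3$. The gap is in \textsc{(i)}$\Rightarrow$\textsc{(iii)}, at exactly the point your sketch glosses over. With the $\Sigma_3$-Representation Theorem~\ref{thm: Sigma_3 representation}, the stage-by-stage ``guess'' that $e\in A$ has $\Pi_2$ character: when $e\in A$ the evidence (growth of some $W_{f(e,y)}$) recurs infinitely often but also lapses infinitely often, and when $e\notin A$ it lapses permanently. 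Under your rule ``copy $\alpha_e$ while the guess holds, migrate to some $\rho{^\frown}X$ strictly above the current approximation when it collapses,'' a true index $e\in A$ triggers infinitely many migrations, each pushing the approximation strictly upward; since a left-r.e.\ approximation can never retract, the slot can never resume copying $\alpha_e$ once it has been pushed past it, and the limit of infinitely many upward migrations need not be $\alpha_e$ nor even a member of $\C$. So, as stated, the construction fails on precisely the indices it is meant to capture. Your closing remark that ``every corrective reassignment must push the rational strictly upward'' names the obstacle but does not overcome it.

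The paper's resolution is qualitatively different and is the real content of the theorem. It first converts the $\Sigma_3$ predicate into ``$\gamma_{g(e)}$ is Martin-L\"of random'' via Theorem~\ref{thm: ML Sigma_3-hard}, and then, for each pair $\pair{e,b}$, maintains an approximation that is \emph{always} of the form $\bigl(\text{prefix of $\alpha_e$ of length } r_b(\gamma_{g(e),s})\bigr){^\frown}C$, where $r_b$ is the randomness-deficiency length and $C$ is the shift-persistent element. If $b$ is a correct randomness constant, the prefix length tends to infinity, $C$ is ``kicked to infinity,'' and the slot converges to $\alpha_e$; otherwise the prefix stabilizes and the slot converges to $\sigma{^\frown}C\in\C$. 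Monotonicity is preserved by the $\grow$ bookkeeping, and ignorance of the witness is absorbed by devoting one slot to each pair $\pair{e,b}$. Some device of this kind---a single length parameter per slot that tends to infinity iff $e\in A$, with the fallback built in from the start rather than switched to---is needed before the Friedberg uniqueness layer can even get off the ground. That layer, too, is more delicate than ``reassign on observed agreement'': equality of left-r.e.\ reals is $\Pi_2$ and never witnessed at a finite stage, which is why the paper runs a separate Kummer-style argument using the $\Sigma_2$ set of minimal indices and the decomposition $\C=\A\union\B$, with $\B$ serving as a garbage can containing, above every element of $\A$, an unused target to absorb wrong guesses.
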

\begin{proof}
Let $\alpha$ be any universal left-r.e.\ numbering, and let
\[
	\C_\alpha = \{e: \alpha_e \in \C \}.
\]
\begin{proof}[\textsc{(i)}$\iff$\textsc{(ii)}]
Suppose that $\beta$ is a left-r.e.\ numbering for $\C$.  Then
\begin{align*}
\C_\alpha &= \{e : (\exists d)\: [\alpha_e = \beta_d] \} \\
&= \left\{e: (\exists d)\: (\forall n,s)\: (\exists t>s)\: [\alpha_{e,t} \restr n = \beta_{d,t} \restr n] \right\},
\end{align*}
so $\C_\alpha$ is a $\Sigma_3$ set.

Conversely, assume that $\C_\alpha \in \Sigma_3$ and let $\gamma$ be an acceptable universal left-r.e.\ numbering.  By Theorem~\ref{thm: ML Sigma_3-hard}, there exists a recursive function $g$ such that
\[
	e\in\C_\alpha \iff \text{$\gamma_{g(e)}$ is Martin-L\"{o}f random}.
\]
For sets $X$, let
\[
	r_b(X) = \max \{n : (\forall m\le n)\: [K(X \restr m) \geq m - b]\},
\]
and in case $X$ has a recursive approximation $X_0, X_1, X_2, \dotsc$, then we define a monotonic approximation to $r_b$ as follows:
\[
	r_{b,s+1}(X) = \max \{r_{b,s}(X), \max \{n : (\forall m\le n)\: [K_{s}(X_{s}\restr m)\geq m - b]\}\},
\]
where $K_{s}$ is a monotonically decreasing computable approximation to $K$. It may not be the case that $\lim_s r_{b,s}(X) = r_b(X)$, however we do achieve $\lim_s r_{b,s}(X) = \infty \iff r_b(X) = \infty$.

Without loss of generality, assume that $\alpha_{e,s}$ has finitely many 1's at each stage $s$ of the recursive approximation.  Let $C$ be a shift-persistent element of $\C$, and let $C_0, C_1, C_2 \dotsc$ be a left-r.e.\ approximation for $C$.  Since we want to avoid dealing with $\alpha_e$'s which are equal to 0, let
\[
	f(e) = \text{$e^\text{th}$ $\alpha$-index found to be nonzero},
\]
and let $t(e)$ be the first stage at which $\alpha_{f(e)}$ appears to be nonzero.  For notational convenience, let
\[
	q(e) = \min \{x : \alpha_{f(e),t(e)}(x)=1\},
\]
and let
\[
	\xi_{\pair{e,b},s} =
	\begin{cases}
	0^{q(e)} & \text{if $\size{r_{b,s}(\gamma_{g[f(e)],s})} \leq q(e)$;} \\
	\alpha_{f(e),s} \restr r_{b,s}(\gamma_{g[f(e)]}) &\text{otherwise}\\
	\end{cases}
\]
be the prefix of $\alpha_{f(e),s}$ that has the length of $\gamma_{g[f(e)]}$'s prefix which looks random at stage~$s$. Let
\begin{multline*}
m(e,s) = \text{greatest stage $t + 1 < s$ such that} \\ \max \{x: \alpha_{f(e),t+1}(x) =1\} \neq \max \{x: \alpha_{f(e),t}(x) = 1\}.
\end{multline*}
Define a further left-r.e.\ numbering $\beta$ by
\begin{equation} \label{eqn: Sigma_3 beta}
\beta_{\pair{e,b},s+1} = \xi_{\pair{e,b},m(e,s){\grow}}{^\frown}C_{s+1}.
\end{equation}
The operator $\grow$ in \eqref{eqn: Sigma_3 beta} is needed to ensure that $\beta$ is a left-r.e.\ numbering: whenever $r_{b,m(e,s+1)}(\gamma_{g[f(e)]}) \neq r_{b, m(e,s)}(\gamma_{g[f(e)]})$, this expansion is handled by replacing a ``0'' with ``1'' which clears the higher indices, making room for $C_{s+1}$.

Finally, $\beta_0, \beta_1, \dotsc$ is a left-r.e.\ numbering for $\C$.  Indeed,
\begin{eqnarray*}
f(e) \in \C_\alpha &\implies & \text{$(\exists b)\: [\gamma_{g[f(e)]}$ is Martin-L\"{o}f random with constant $b$]} \\
 &\implies & \beta_\pair{e,b} = \alpha_{f(e)}.
\end{eqnarray*}
Of course a $\beta$-index for the real 0 can be added if necessary.  In the case where $\gamma_{g[f(e)]}$ is not Martin-L\"{o}f random with constant $b$, $C_s$ does not get kicked to infinity but then $\beta_\pair{e,b} \in \C$ because $C$ is a shift-persistent member of $\C$.
\noqed
\end{proof}

\begin{proof}[\textsc{(ii)}$\iff$\textsc{(iii)}]
Assume that $\C$ has a numbering $\gamma$.  Let $C$ be a shift-persistent element of $\C$, and let
\[
	\B = \{1^n {^\frown} C : n \in \omega\} \union \{X \in \C : X \leq C\}
\]
be a subclass of $\C$.  $\B$ is the union of two classes which have left-r.e.\ numberings and therefore has itself a left-r.e.\ numbering.  A numbering for the latter class is achieved by pausing the enumeration of $X$ whenever it tries to exceed $C$.  Let $\beta$ be a left-r.e.\ numbering for $\B$.  

Note that
\[
	\A := \{X : X \in \C  - \B\} =  \{X \in \C : (\exists n)\: [1^n {^\frown} C < X < 1^{n+1} {^\frown} C] \}
\]
has a left-r.e.\ numbering $\alpha$ given by: $	\alpha_{\pair{e,n,k},s}=$
\[
	\begin{cases}
	1^n {^\frown} C_s + 2^{-k} & \text{if $(\gamma_{e,s} \restr k){^\frown}0  \leq_\lex (1^n {^\frown} C_s \restr k){^\frown}0$;} \\
	\gamma_{e,s} & \text{if $(1^n {^\frown} C_s \restr k){^\frown}0 <_\lex (\gamma_{e,s} \restr k){^\frown}0 <_\lex (1^{n+1} {^\frown} C_s \restr k){^\frown}0$;} \\
	1^{n+1} {^\frown} C_s - 2^{-k} & \text{if $(1^{n+1}{^\frown}C_s \restr k){^\frown}0 \leq_\lex (\gamma_{e,s} \restr k){^\frown}0$.}
	\end{cases}
\]
where the triple $\pair{e,n,k}$ ranges over values $k$ which are greater than or equal to the index of the least 0 in $1^n{^\frown} C$.  The numbering $\alpha$ exploits the fact that if $X \neq 1^n{^\frown}C$, then $X$ and $1^n{^\frown}C$ must differ on some prefix.  Strictly speaking, every \emph{tail} of $C$ must be a shift-persistent element in order that each $\alpha$-index yields a member of $\C$.  Since every member of $\C$ is infinite, however, we can overcome this shortcoming by modifying the tails for $\alpha_{\pair{e,n,k},s}$ to be $C_s$ in the first and third cases.

Using $\alpha$ and $\beta$, we now exhibit a Friedberg numbering $\zeta$ for $\A \union \B = \C$.  Let
\[
 M = \{ e: (\forall j < e)\: [\alpha_j \neq \alpha_e]\}.
\]
Every member of $\A$ has a unique index in $M$.  Since $M$ is a $\Sigma_2$-set, there exists a $\0'$-recursive function $m$ whose domain is $M$.  Let $m_0, m_1, m_2, \dotsc$ be a recursive approximation to $m$.  Using this approximation, we shall design $\zeta$ in such a way that each $\alpha$-indexed real in $M$ occurs at exactly one $\zeta$-index, and the remaining $\zeta$-indices will be home to the $\beta$-indexed reals.

We define a function $f: \omega \mapsto (\omega \union \{\infty\}) \times \{\alpha, *\}$ which maps $\zeta$-indices to either $\alpha$-indices or *'s.  The $\infty$ symbol is used for destroyed $\beta$ indices which are (or never were) attached to $\zeta$-indices, and the $\alpha$ and $*$ symbols indicate whether the particular $\zeta$-index is following an $\alpha$-index or a $\beta$-index.  If $f(e) = \pair{x,*}$ for some $x$, $f(e)$ ``explodes'' and we say that the $\zeta$-index $e$ has been \emph{destroyed}.  $f_s: \omega \mapsto ((\omega \union \{\infty\}) \times \{\alpha, *\}) \union \{\diverge\}$ will be a recursive approximation to $f$ based on the recursive approximation $m_s$.  $\zeta$-indices that are destroyed at some stage take on $\beta$-indices in the limit (rather than $\alpha$-indices).  We shall also keep track of which $\beta$-indices have been taken on by $\zeta$-indices: $G_s$ will be the set of $\beta$-indices which have been $\zeta$-used by stage $s$.  We will achieve $\lim G_s = \omega$.  Since $\C$ contains only infinite sets, every $\alpha$-indexed real is less than some $\beta$-indexed real, and therefore we can use $\beta$ as a garbage can to collect for those approximations $m_s(e)$ which turned out to be wrong.  We shall also have an auxiliary recursive function $r(s)$ which marks the boundary between the $\zeta$-indices which are following values in $\omega \union \{*\}$ and those whose value is $\diverge$ at stage $s$.

The construction is as follows:
\begin{description}

\item{\emph{Stage 0.}}

Set $G_0 = \emptyset$, $r(0) = 0$, $f_0(e) = \diverge$, and $\zeta_{e,0} = 0$ for all $e \geq 0$.

\item{\emph{Stage $s+1$.}}
Let
\begin{gather*}
A = \{x < s : \text{$m_{s+1}(x) \diverge$ and $m_{s}(x) \converge$}\}, \\
X = \{x < s : \text{$m_{s+1}(x) \converge$ and $m_{s}(x) \diverge$}\},
\end{gather*}
let $\{a_1, a_2, \dotsc a_k\}$ be the indices below or equal to $r(s)$ satisfying $f_s(e_i) \in A$, and let $\{x_1, x_2, \dotsc, x_d\}$ be the indices below or equal to $r(s)$ satisfying $f_s(e_i) \in X$.  We destroy all followers of $\{x_1, \dotsc, x_d\}$, and create new followers for $\{a_1, \dotsc, a_k\}$:
\begin{equation} \label{eqn1: kummer proof}
f_{s+1}(n) =
\begin{cases}
\pair{x_i, *} & \text{if $f_s(n) = \pair{x_i, \alpha}$ for some $1 \leq i \leq d$;}\\
\pair{a_i, \alpha} &\text{if $n = r(s) + i$ for some $1 \leq i \leq k$;} \\
\pair{s, \alpha} &\text{if $n = r(s) + k + 1$;} \\
\pair{\infty,*} &\text{if $n = r(s) + k + 2$;} \\
f_s(n) &\text{otherwise.}
\end{cases}
\end{equation}
The $\zeta$-index $r(s)+k+1$ is used to introduce a new $\alpha$-index, and the $\zeta$-index $r(s)+k+2$ is used to ensure that some new $\beta$-index is taken up at this stage.  Set
$r(s+1) = r(s) + k + 2$.

Next, assign new reals from $\B$ to the $\zeta$-indices that were destroyed in this stage.
\begin{itemize}
\item Let
\[
	y_1 = (\mu n)\: [\beta_{n,s} > \zeta_{x_1,s} \andd n \notin G_s]
\]
and inductively for $0 \leq i \leq d$,
\[
	y_{i+1} = (\mu n)\: [\beta_{n,s} > \max\{\zeta_{x_{i+1},s}, \beta_{y_i}\} \andd n \notin G_s].
\]
Choose the least $\beta$-index not yet assigned to a $\zeta$-index and call it $z$:
\begin{equation} \label{eqn: kummer proof z}
	z = \min\{n: \text{$n \notin \{y_1, y_2, \dotsc, y_d\}$ and $n \notin G_s$}\}.
\end{equation}
This choice of $z$ ensures that every member of $\B$ will have some index in $\zeta$.
\item Set
\begin{equation*} \label{eqn2: kummer proof}
	\zeta_{n,t} =
	\begin{cases}
	\beta_{y_i,t} & \text{if $f_{s+1}(n) = \pair{x_i,*}$ for some $1 \leq i \leq d$;} \\
	\beta_{z,t} & \text{if $n = r(s+1)$.}
	\end{cases}
\end{equation*}
for all $t>s$.

\item Set $G_{s+1} = G_s \union \{y_0, y_1, \dotsc, y_k, z\}$.
\end{itemize}

For the remaining $\zeta$-indices which have not been destroyed in this stage or some previous stage, continue following $\alpha$-indices:
\begin{equation} \label{eqn3: kummer proof}
	\zeta_{e,s+1} =
	\begin{cases}
	0 &\text{if $f_{s+1}(e) = \diverge$;} \\
	\alpha_{f_{s+1}(e),s+1} &\text{if $f_{s+1}(e) \notin \{\pair{n,*} :  n\in \omega\} \union \{\diverge\}$}.
	\end{cases}
\end{equation}
\end{description}
By induction on stages, \eqref{eqn1: kummer proof} and \eqref{eqn3: kummer proof} ensure that for all $s$ and $e \leq s$, there exists a unique $n$ such that
\[
	\zeta_{n,s+1} = \alpha_{\text{first projection of $f_{s+1}(e),s+1$}}.
\]
Since each sequence $\{\alpha_{\text{first projection of $f_{s+1}(e)$}}\}$ converges to a unique member in the range of $\alpha$ on the set of indices $e \in\dom m$, it follows that there is a unique $\zeta$-index for each real in $\A$.  Indeed for $e \notin \dom m$, the approximation for $m(e)$ may oscillate between convergence and divergence infinitely often, but we simply introduce a fresh $\zeta$-index for an unused member of $\B$ each time this happens and therefore $\alpha_e$ will not occupy a $\zeta$-index in the limit.   Furthermore \eqref{eqn1: kummer proof} and \eqref{eqn: kummer proof z} ensure that there is a unique $\zeta$-index for each real in $\B$.

Finally, $\zeta_e \in \A \union \B$ for all $e$.  If the index $e$ is destroyed at some stage in the construction, then some $\beta$-index $n$ is assigned at that stage and $\zeta_e = \beta_n$.  On the other hand if index $e$ is never destroyed, then $\zeta_e$ takes an $\alpha$-index, namely $\zeta_e = \alpha_{\text{first projection of $f(e)$}}$.
\noqed
\end{proof}
Hence \textsc{(i)}$\iff$\textsc{(ii)}$\iff$\textsc{(iii)}.
\end{proof}

\begin{cor} \label{cor: infinites are enumerable}
The following classes have left-r.e.\ numberings:
\begin{enumerate}[\scshape (i)]
\item the left-r.e.\ Martin-L\"{o}f random sets,
\item the left-r.e.\ Kurtz non-random sets,
\item the infinite left-r.e.\ sets, and
\item the infinite r.e.\ sets.
\end{enumerate}
\end{cor}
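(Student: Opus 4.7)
The strategy is to apply Theorem~\ref{Sigma_3 iff numbering} in each case by verifying (a) the class consists of infinite left-r.e.\ reals, (b) it contains a shift-persistent element, and (c) its index set in some universal left-r.e.\ numbering is $\Sigma_3$. When the class contains finite reals, as it will in (ii), I would first split off the trivially enumerable subclass of dyadic rationals and apply the theorem only to the infinite remainder, combining the two left-r.e.\ numberings by interleaving.

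For (i), every Martin-L\"of random real is non-computable and therefore infinite, and Chaitin's $\Omega$ is shift-persistent because prepending a fixed string alters prefix-free complexity by only an additive constant. For the $\Sigma_3$ bound, rewrite
\[
\alpha_e \text{ is Martin-L\"of random} \iff (\exists c)(\forall n)(\forall \sigma,|\sigma|=n)\bigl[\alpha_e \restr n \neq \sigma \orr K(\sigma) \geq n-c\bigr].
\]
Because the increasing approximation $\alpha_{e,s}\restr n$ is eventually constant, the predicate ``$\alpha_e \restr n = \sigma$'' is $\Delta_2$, and $K(\sigma) \geq n-c$ is $\Pi_1$; the body is therefore $\Pi_2$, and the whole formula is $\Sigma_3$. (Tightness is already in Corollary~\ref{cor: ML randoms are Sigma_3-hard}.) Claim~(iii) is even easier: being infinite is a $\Pi_2$ (hence $\Sigma_3$) condition, and $0101\dotsc$ is a shift-persistent infinite left-r.e.\ real.

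For (ii), every finite left-r.e.\ real is a dyadic rational, which is computable and lies in the null $\Pi^0_1$ singleton $\{X\}$, so the finite reals sit inside the Kurtz non-randoms. These split off as a recursive set, and what remains is the class of infinite Kurtz non-random left-r.e.\ reals, for which $0101\dotsc$ is again shift-persistent (any finite prefix extension of a computable real is computable, hence Kurtz non-random). Letting $\{P_d\}$ effectively enumerate the $\Pi^0_1$ subclasses of $2^\omega$,
\[
\alpha_e \text{ is Kurtz non-random} \iff (\exists d)\bigl[\mu(P_d)=0 \andd \alpha_e \in P_d\bigr].
\]
The condition $\mu(P_d)=0$ is $\Pi_2$ (the complementary c.e.\ open set has measure approaching $1$), and $\alpha_e \in P_d$ unfolds as $(\forall n)\,R_d(\alpha_e\restr n)$, which is $\Pi_2$ by the same limit-prefix argument as in (i). The body is $\Pi_2$, so the full formula is $\Sigma_3$.

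The main obstacle is the quantifier bookkeeping in the Kurtz case, particularly confirming that ``$\alpha_e \in P_d$'' really does reduce to $\Pi_2$ despite $\alpha_e$ being given only via an increasing recursive approximation. Once the three $\Sigma_3$ upper bounds and the shift-persistent witnesses are in hand, Theorem~\ref{Sigma_3 iff numbering} supplies the desired left-r.e.\ numberings (and indeed even Friedberg numberings for the infinite sub-classes).
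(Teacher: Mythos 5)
Your proposal is correct and takes essentially the route the paper intends: the corollary is stated without proof as an immediate application of Theorem~\ref{Sigma_3 iff numbering}, and you supply exactly the required verifications (infinitude, shift-persistent witnesses, and the $\Sigma_3$ index-set bounds). Your explicit splitting-off of the dyadic rationals in case \textsc{(ii)} patches a hypothesis (``class of \emph{infinite} left-r.e.\ reals'') that the paper silently glosses over, and is the right way to do it.
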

Proposition~\ref{prop: no r.e. numbering for infinite} below contrasts with Corollary~\ref{cor: infinites are enumerable}\nlb(\textsc{iv}).  This dichotomy does not surprise us too much as the recursive sets are also enumerable if viewed as r.e.\ characteristic functions, what is well-known to be impossible for recursive functions.  To see such an enumeration, we start with an enumeration of the binary p.r.\ functions, $f_0, f_1, f_2, \dotsc$.  We can uniformly interpret each $f_i$ as the recursive set whose characteristic function is the truncation of $f_i$ up to the highest number $n$ such that $f_i(x) \converge$ for all $x < n$, followed by the constant zero function.  Then the indices for total functions will yield the characteristic functions for the recursive sets, and the non-total functions will yield finite sets which are also recursive.
\begin{prop} \label{prop: no r.e. numbering for infinite}
There is no r.e.\ numbering of the infinite r.e.\ sets.
\end{prop}
\begin{proof}
Suppose that $A_0, A_1, A_2, \dotsc$ were an r.e.\ numbering of the infinite r.e.\ sets.  Search for an $a_0 \in A_0$, and let $b_0 = a_0+1$.  Next, search for an $a_1 \in A_1$ which is greater than $b_0$, and let $b_1 = a_1 + 1$.  Continuing the diagonalization, find $a_2 \in A_2$ which is greater than $b_1$ and let $b_2 = a_2 +  1$, and proceed similarly for $b_3, b_4, \dotsc$.  Now $\{b_0 < b_1 < b_2 < \dotsc\}$ is an infinite r.e.\ set which disagrees from the $n^\text{th}$ r.e.\ set at $a_n$.
\end{proof}

It remains to show that the hypothesis ``contains a shift-persistent element'' is necessary in Theorem~\ref{Sigma_3 iff numbering}.
\begin{thm} \label{thm: Sigma_3 not closed under shifts}
There exists a $\Sigma_3$-class of infinite left-r.e.\ reals which contains no shift-persistent element and has no left-r.e.\ numbering.
\end{thm}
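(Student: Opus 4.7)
The plan is to build $\C$ from a $\Pi_2$-complete index set together with a uniformly computable sequence of infinite reals placed in pairwise disjoint rational half-open intervals. Take $S = \mathrm{TOT} = \{e : \phe_e \text{ is total}\}$, which is $\Pi_2$-complete and has infinite complement. For each $n$, set $I_n = (1-2^{-n},\,1-2^{-n-1}]$ and pick the computable infinite real $X_n = 1-2^{-n}+2^{-n-2}/3 \in I_n$; its binary expansion $0.1^n 000\overline{10}$ exhibits infinitely many $0$'s and $1$'s, so $X_n$ is infinite as a real. The intervals $I_n$ are pairwise disjoint and $X_m \in I_n$ iff $m = n$, so the $X_n$ are distinct. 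Define $\C = \{X_n : n \in S\}$.

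Three things then need checking. First, $\C$ is $\Sigma_3$: in any universal left-r.e.\ numbering $\alpha$, the index set is $\{d : (\exists n \in S)\, \alpha_d = X_n\}$, which is $\Sigma_3$ because ``$\alpha_d = X_n$'' is $\Pi_2$ (equality of a left-r.e.\ real with a computable one) and $S$ is $\Pi_2$. Second, no element of $\C$ is shift-persistent: for any $X_n \in \C$, the real $0^{\frown}X_n = X_n/2$ lies in $I_0 = (0,1/2]$ yet $X_n/2 \neq X_0 = 1/12$ (the equation $X_n = 1/6$ would force $2^{-n} = 10/11$), so $X_n/2$ is not any $X_m$ and in particular not in $\C$.

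The heart of the argument, and the step I expect to carry the weight, is showing $\C$ admits no left-r.e.\ numbering. Assume for contradiction that $\beta$ enumerates $\C$. Every $\beta_j$ equals some $X_{m_j}$ with $m_j \in S$, and by disjointness of the $I_n$'s one has $\beta_j \in I_n \iff \beta_j = X_n$; consequently
\[
n \in S \iff (\exists j)\;\beta_j \in I_n.
\]
The inner predicate $\beta_j \in I_n$ splits as $(\beta_j > 1-2^{-n}) \wedge (\beta_j \leq 1-2^{-n-1})$: the first conjunct is $\Sigma_1$ (some approximation stage $s$ has $\beta_{j,s} > 1-2^{-n}$) and the second is $\Pi_1$ (no stage exceeds $1-2^{-n-1}$). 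The outer $(\exists j)$ then puts $S$ in $\Sigma_2$, contradicting the $\Pi_2$-completeness of $\mathrm{TOT}$.

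The one delicate design choice is the use of half-open intervals $(a_n, b_n]$ with dyadic endpoints. An open interval $(a_n, b_n)$ would force the upper bound ``$\beta_j < b_n$'' to become only $\Sigma_2$ for left-r.e.\ $\beta_j$, so the reduction for $S$ would merely yield $S \in \Sigma_3$, which is compatible with $S \in \Pi_2$ and delivers no contradiction. Replacing $<$ by $\leq$ via the half-open endpoints is what makes the reduction tight enough to invoke $\Pi_2$-completeness.
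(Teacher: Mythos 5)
Your proposal is correct, and the crucial third step differs genuinely from the paper's. Both constructions have the same skeleton: a uniformly computable family of pairwise distinct infinite computable reals indexed by a $\Pi_2$-complete predicate (the paper uses $\{1^n0^\omega \join \omega : W_n \text{ infinite}\}$, you use reals $X_n$ isolated in disjoint dyadic half-open intervals with index set $\mathrm{TOT}$), which makes the $\Sigma_3$ bound and the failure of shift-persistence routine in either case. Where you diverge is in ruling out a numbering: the paper converts a hypothetical left-r.e.\ numbering $\beta$ of its class into an r.e.\ numbering of the infinite r.e.\ sets via a movable-marker construction (reading off $n$ from the prefix $1^{2n}0$ of $\beta_e$ and dumping injured indices onto $\omega$), and then appeals to the separately proved Lemma~\ref{lem: no r.e. numbering for infinite}; you instead observe directly that membership of a left-r.e.\ real in a half-open dyadic interval $(a,b]$ is a $\Sigma_1\wedge\Pi_1$ predicate of the approximation, so a numbering would place $\mathrm{TOT}$ in $\Sigma_2$, contradicting $\Pi_2$-completeness. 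Your route is shorter and entirely self-contained --- no auxiliary diagonalization or marker bookkeeping, just the arithmetic hierarchy theorem --- and your closing remark about why the interval must be half-open (an open upper endpoint would degrade the reduction to $\Sigma_3$) correctly identifies the one place the argument could silently fail. What the paper's route buys is the explicit link to the classical fact that the infinite r.e.\ sets have no r.e.\ numbering, which it wants on record anyway as a foil for Corollary~\ref{cor: infinites are enumerable}\textsc{(iii)}.
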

\begin{proof}
Let $\alpha$ be a universal left-r.e.\ numbering and define the following $\alpha$-index set:
\begin{multline} \label{eqn:new3.8}
X = \{e: (\exists x)\: [x \notin \alpha_e \union \Omega] \\
\text{and } (\forall y<x)\: [y \in \alpha_e \iff y \notin \Omega] \\
\text{and } (\forall y>x)\: [ y \in \alpha_e \iff \text{$y$ is odd}]\}.
\end{multline}
By the third line, $X$ is infinite, and by the second line, $X$ contains no shift-persistent element.  Furthermore, \eqref{eqn:new3.8} is a $\Sigma_2$-formula with a $\0'$-recursive
predicate, hence $X \in \Sigma_3$.  If $X$ would have a left-r.e.\ numbering, then by the first line, $\complement{\Omega}$ would be the lexicographic supremum of all the approximations occurring to members of this left-r.e.\ numbering and $\complement{\Omega}$ would be a left-r.e.\ set, contradicting that $\Omega$ is nonrecursive.
\end{proof}
\noindent
Also along the lines of randomness, we note that the class of left-r.e.\ reals $X$ satisfying $X + \Omega \leq 1$ has a $\Pi_1$ index set (in any numbering), has no shift-persistent element, and has no left-r.e.\ numbering.  Indeed if this class had a left-r.e.\ numbering, then $\Omega$ would be recursive.

\begin{cor} \label{cor: ML are Sigma_3 - Pi_3}
The left-r.e.\ Martin-L\"{o}f non-random reals, computable non-random reals, and Schnorr non-random reals have no left-r.e.\ numberings.  Hence none of these classes has a $\Sigma_3$ index set in any universal left-r.e.\ numbering.
\end{cor}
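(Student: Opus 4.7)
The plan is to derive a contradiction from the $\Sigma_3$-hardness of the random-index sets (Corollary~\ref{cor: ML randoms are Sigma_3-hard}) together with the easy direction (ii)$\implies$(i) of Theorem~\ref{Sigma_3 iff numbering}, which does not invoke the shift-persistent hypothesis.

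Let $\C$ be any one of the three non-random classes and $\mathcal{R}$ the corresponding random class within the left-r.e.\ reals. Fix an acceptable universal left-r.e.\ numbering $\alpha$. By Corollary~\ref{cor: ML randoms are Sigma_3-hard}, $\{e : \alpha_e \in \mathcal{R}\}$ is $\Sigma_3$-hard; hence it cannot be $\Pi_3$, for otherwise every $\Sigma_3$ set would many-one reduce to a $\Pi_3$ set, forcing the collapse $\Sigma_3 \subseteq \Pi_3$, against the strictness of the arithmetic hierarchy. Consequently $\{e : \alpha_e \in \C\}$ is not $\Sigma_3$. If $\C$ had a left-r.e.\ numbering $\beta$, then the easy direction (ii)$\implies$(i) of Theorem~\ref{Sigma_3 iff numbering} would force
\[
\{e : \alpha_e \in \C\} = \bigl\{e : (\exists d)(\forall n,s)(\exists t>s)\,[\alpha_{e,t}\restr n = \beta_{d,t}\restr n]\bigr\} \in \Sigma_3,
\]
a contradiction. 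This proves the first sentence.

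For the second sentence (``no $\Sigma_3$ index set in any universal left-r.e.\ numbering''), observe that each $\C$ contains a shift-persistent element (for example $0^\omega$, since every finite real is non-random and hence lies in $\C$ together with all of its finite shifts), so the reverse direction (i)$\implies$(ii) of Theorem~\ref{Sigma_3 iff numbering} applies: a $\Sigma_3$ index set in any universal left-r.e.\ numbering would produce a left-r.e.\ numbering of $\C$, contradicting the first sentence. The main step of the proof is thus a parity check in the arithmetic hierarchy; all the substantive technical work has already been carried out in Corollary~\ref{cor: ML randoms are Sigma_3-hard}.
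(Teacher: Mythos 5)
Your proof is correct and follows essentially the same route as the paper's: it combines the $\Sigma_3$-hardness of the random index sets (Corollary~\ref{cor: ML randoms are Sigma_3-hard}) with the strictness of the arithmetic hierarchy and both directions of Theorem~\ref{Sigma_3 iff numbering}. You are merely more explicit than the paper's two-line argument about which direction of that theorem needs the shift-persistent hypothesis and about exhibiting $0^\omega$ as such an element of the non-random classes.
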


\begin{proof}
These classes are $\Pi_3$-hard in any acceptable numbering by Corollary~\ref{cor: ML randoms are Sigma_3-hard}.  It follows from Theorem~\ref{Sigma_3 iff numbering} that none of these classes are effectively enumerable and hence cannot be $\Sigma_3$ in any universal left-r.e.\ numbering.
\end{proof}

\section{Weakly 1-generic sets}

We examine left-r.e.\ numberings for Kurtz random, bi-immune, bi-hyperimmune, and weakly 1-generic sets.  We introduced weakly 1-generic sets in Definition~\ref{def:1-generic}.

\begin{defn}\label{im}
An infinite set is \emph{immune} if it contains no infinite recursive subset. Even stronger, a set $A=\{a_0<a_1<\cdots\}$ is \emph{hyperimmune} if there exists no recursive function $f$ such that $f(n) > a_n$ for all $n$. It is \emph{bi-(hyper)immune} if both $A$ and the complement $\complement{A}$ are (hyper)immune.
\end{defn}

\begin{thm}
Let $A \subseteq \omega$ be a $\Sigma_3$-set, and let $\alpha$ be an acceptable universal left-r.e.\ numbering.  Then there exist a recursive function $g$ such that
\begin{align*}
x \in A & \implies \text{$\alpha_{g(x)}$ is co-finite;} \\
x \notin A & \implies \text{$\alpha_{g(x)}$ is weakly 1-generic.}
\end{align*}
\end{thm}
\begin{proof}
Let $W$ be an acceptable universal r.e.\ numbering. By the $\Sigma_3$-Representation Theorem, there exists a recursive function $f$ such that:
\begin{align*}
x \in A &\implies \text{$W_{f(x,n)}$ is infinite for some $n$;} \\
x \notin A &\implies \text{$W_{f(x,n)}$ is finite for all $n$.}
\end{align*}
The idea now is make $\alpha_{g(x)}$ a sequence of the form
\begin{equation*}
	\alpha_{g(x)} = \sigma_0{^\frown} 0{^\frown} \sigma_1{^\frown} 0 {^\frown} \sigma_2{^\frown} 0 \dotsb
\end{equation*}
such that $\sigma_0{^\frown} 0{^\frown} \sigma_1{^\frown} 0{^\frown} \dotsb {^\frown} \sigma_n$ is a member of $W_n$ whenever $W_n$ is dense and $W_{f(x,e)}$ is finite for all $e$.  If on the other hand $W_{f(x,e)}$ is infinite for some $e$, then some $\sigma_n$ will blow up to infinity and $\alpha_{g(x)}$ will be co-finite.

For every $n$, let
\[
\tau_{n,s} = \sigma_{0,s{^\frown}} 0{^\frown} \sigma_{1,s}{^\frown} 0{^\frown} \dotsb {^\frown}\sigma_{n,s}.
\]
At Stage~0, $\sigma_{e,0} = 0$ for all $e$, and $\alpha_{g(x),0} = \sigma_{0,0}{^\frown} 0{^\frown} \sigma_{1,0}{^\frown} 0 {^\frown} \sigma_{2,0}{^\frown} 0{^\frown}\dotsb$.
At Stage~$s+1$, let $e$ be the least index, if one exists, such that either:
\begin{enumerate}
\item $\size{W_{f(e,x),s+1}} > \size{W_{f(e,x),s}}$, or
\item some member of $W_{e,s+1}$ extends $\tau_e{^\frown} 1$, and no member of $W_{e,s+1}$ is a prefix of $\tau_e$.
\end{enumerate}
If condition~1 is satisfied, let $\sigma_{e,s+1}$ = $\sigma_{e,s}{^\frown} 1$ so that $\sigma_e$ becomes longer.  Otherwise, let $\sigma_{e,s+1}$ be an extension of $\sigma_{e,s}{^\frown} 1$ such that $\tau_{e,s+1} \in W_{e,s+1}$.  This latter case aims to make $\alpha_{g(x)}$ weakly~1-generic.  In either case, $\sigma_{j,s+1} = \sigma_{j,s}$ for $j \neq e$.  If no such $e$ exists, skip to Stage~$s+2$.

The sequence $\{\alpha_{g(x),s}\}$ is indeed a left-r.e.\ approximation.  In each stage~$s$ where some action takes place in the construction, the 0 following $\sigma_{e,s}$ is changed to a 1 before this string is extended.

We claim that if $W_{f(x,n)}$ is finite for all $n$, then $\alpha_{g(x)}$ will be weakly 1-generic.  By some stage~$s$, $W_{f(0,x),s}$ must stop expanding.  Whether or not $W_0$ is dense, $\sigma_0$ will change at most one time after stage $s$, and therefore $\tau_0$ settles by some stage~$t_0$.   If $W_0$ is dense, then $\tau_0$ will contain a member of $W_0$.  Similarly $W_{f(1,x),s}$ must stop expanding at some point after stage~$t_0$, $\tau_1$ will eventually contain a prefix of $W_1$ if $W_1$ is dense, and $\tau_{1,t}$ settles by some stage $t_1$.  Continuing by induction, we see that $\alpha_{g(x)}$ is weakly 1-generic.

If $W_{f(x,n)}$ is infinite for some least $n$, then the argument in the previous paragraph shows that $\tau_{n-1,t}$ eventually settles, and then infinitely often $\sigma_{n,s+1} = \sigma_{n,s}{^\frown}1$ and so $\alpha_{g(x)}$ is co-finite.
\end{proof}
\noindent
Since every weakly 1-generic set is both hyperimmune \cite[Proposition~1.8.48]{N09} and Kurtz random (Theorem~\ref{thm: 1-generic implies Kurtz random}) we have the following:
\begin{cor}\label{bi-hard}
In any acceptable universal left-r.e.\ numbering, the index sets for the following classes are $\Pi_3$-hard:
\begin{compactenum}[\scshape (i)]
\item the left-r.e.\ immune sets,
\item the left-r.e.\ hyperimmune sets,
\item the left-r.e.\ bi-immune sets,
\item the left-r.e.\ bi-hyperimmune sets,
\item the left-r.e.\ weakly 1-generic sets,
\item the left-r.e.\ Kurtz random sets.
\end{compactenum}
\end{cor}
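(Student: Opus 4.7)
The plan is to combine the hardness direction of Theorem~\ref{thm: finite or bi-hyperimmune} with a direct quantifier count for the upper bound.

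For $\Pi_3$-hardness, given any $\Pi_3$ set $B$ I would apply Theorem~\ref{thm: finite or bi-hyperimmune} to the $\Sigma_3$ complement $\omega\setminus B$ to obtain a recursive $g$ with the property that $x\notin B \implies \alpha_{g(x)}$ is finite (in fact of the form $\sigma{^\frown}1^\omega$, whose $1$-positions contain the infinite recursive subset $\{n : n>|\sigma|\}$ and whose $0$-positions form a finite set), while $x\in B \implies \alpha_{g(x)}$ is bi-hyperimmune. Since bi-hyperimmune immediately entails bi-immune, hyperimmune, and immune (an infinite recursive subset $\{r_0<r_1<\cdots\}$ would give a recursive function $n\mapsto r_n+1$ dominating the principal function), the single $g$ many-one reduces $B$ simultaneously to all four index sets.

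For the matching $\Pi_3$ upper bound I rely on the arithmetical observation that for a left-r.e.\ real $\alpha_e$ with recursive non-decreasing rational approximation $\alpha_{e,s}$, both ``$\alpha_e(n)=0$'' and ``$\alpha_e(n)=1$'' are $\Sigma_2$ uniformly in $e,n$, because each bit eventually stabilizes ($\exists s_0\,\forall s>s_0$) to its limit value. Consequently ``$W_i\not\subseteq\alpha_e$'' unfolds as $\exists n\,[n\in W_i \wedge \alpha_e(n)=0]$, a $\Sigma_2$ statement, and ``$W_i$ finite $\vee\, W_i\not\subseteq\alpha_e$'' is likewise $\Sigma_2$, so $\forall i$ yields $\Pi_3$. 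The paper's ``no infinite recursive subset'' coincides with the classical ``no infinite r.e.\ subset'' by the standard observation that an infinite r.e.\ set always contains an infinite recursive subset (iteratively take the next enumerated element exceeding those previously chosen). Hyperimmunity follows the same pattern: ``the $n$th element of $\alpha_e$ exceeds $m$'' reduces to a $\Sigma_2$ bound on $|\{k<m : \alpha_e(k)=1\}|$, making the failure-to-dominate clause $\Sigma_2$ and the full statement $\Pi_3$ after $\forall i$. Conjoining with the infinity condition ($\Sigma_2$) and, for the bi-versions, running the same analysis on the bit-complement preserves $\Pi_3$ throughout.

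The main point requiring care is that ``$\alpha_e(n)=0$'' must be $\Sigma_2$ on its own (not merely $\Pi_2$); were it $\Pi_2$ only, the inner $\exists n$ inside ``$W_i\not\subseteq\alpha_e$'' would promote the total to $\Pi_4$. Once that observation and the classical collapse between ``recursive'' and ``r.e.'' infinite subsets are in hand, the rest is a routine bookkeeping of quantifier complexity combined with the hardness reduction supplied by Theorem~\ref{thm: finite or bi-hyperimmune}.
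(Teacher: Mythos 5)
Your proposal is correct and takes essentially the same route as the paper: the paper obtains the hardness half of Corollary~\ref{bi-hard} by exactly your reduction, applying Theorem~\ref{thm: finite or bi-hyperimmune} to the $\Sigma_3$ complement of the given $\Pi_3$ set (its stated alternative via Theorem~\ref{AndreBook} is not needed), and it leaves the $\Pi_3$ upper bound as the routine quantifier count you carry out. Your isolation of the fact that each bit of a left-r.e.\ real is a $\Delta_2$ (in particular $\Sigma_2$) predicate uniformly in $e,n$ is precisely the point that makes that count go through.
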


\noindent
From Theorem~\ref{Sigma_3 iff numbering} we also have the following result.
\begin{cor}\label{not bi-hard}
In any universal left-r.e.\ numbering, the classes listed in Corollary~\ref{bi-hard} have $\Pi_3 - \Sigma_3$ index sets.  Moreover there exists a left-r.e.\ numbering for each of the corresponding complementary classes.
\end{cor}

It is known that every Kurtz random is bi-immune \citep{Kur81}, but the reverse inclusion does not hold \citep{BST10}.  We can also separate the left-r.e.\ versions of these notions.
\begin{prop}\label{bhi not kr}
There exists a left-r.e.\ bi-hyperimmune set which is not Kurtz random.
\end{prop}
\begin{proof}
Let $A$ be any bi-hyperimmune left-r.e.\ set.  Then $A \join A$ is bi-hyperimmune but not Kurtz random since a recursive martingale can win on every second bit.
\end{proof}
\noindent
The reverse of Proposition \ref{bhi not kr} holds as well: Chaitin's $\Omega$ is an example of a left-r.e.\ Martin-L\"{o}f random (in particular, Kurtz random) which, by Lemma~\ref{lots of zeros implies nonrandom}, is not hyperimmune (in particular, not bi-hyperimmune).

\section{Classes of higher complexity}
We now investigate the complex randomness notions of Schnorr randomness and computable randomness.  As we shall see, neither of these left-r.e.\ classes, nor their complements, have left-r.e.\ numberings.  A set $A$ is called \emph{high} if $A' \geq_\T \0''$.  A theorem of Nies, Stephan, and Terwijn \citep{NST05} shows the existence of left-r.e.\ Schnorr randoms which are not Martin-L\"{o}f random:
\begin{thm}[Nies, Stephan, Terwijn \citep{NST05}] \label{thm: high separation}
The following statements are equivalent for any set $A$:
\begin{compactenum}[\scshape (i)]
\item $A$ is high.
\item There is a set $B \equiv_\T A$ which is computably random but not Martin-L\"{o}f random.
\item There is a set $C \equiv_\T A$ which is Schnorr random but not computably random.
\end{compactenum}
In the case that $A$ is left-r.e.\ and high, the sets $B$ and $C$ can
be chosen as left-r.e. sets as well.
\end{thm}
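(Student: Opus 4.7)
The plan is to establish the three-way equivalence via Martin's characterization that $A$ is high if and only if some $A$-recursive function dominates every total recursive function. The reverse implications (ii)$\Rightarrow$(i) and (iii)$\Rightarrow$(i) are relatively straightforward; the constructive directions (i)$\Rightarrow$(ii) and (i)$\Rightarrow$(iii) are the substantive content, and the final clause about left-r.e.-ness adds a further combinatorial burden.

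For (ii)$\Rightarrow$(i), I would assume $B \equiv_\T A$ is computably random but not Martin-L\"{o}f random and fix a c.e.\ martingale $\widehat{M}$ succeeding on $B$. The wait-time function $g(n) = \min\{s : \widehat{M}_s(B \restr s) > 2^n\}$ is $B$-recursive, hence $A$-recursive. If $A$ were not high, some recursive $h$ would dominate $g$; but then a suitably padded and scaled version of $\sigma \mapsto \widehat{M}_{h(\size{\sigma})}(\sigma)$ yields a recursive martingale that succeeds on $B$, contradicting computable randomness. For (iii)$\Rightarrow$(i), if a recursive martingale $M$ succeeds on $C$ but none Schnorr-succeeds, the $C$-recursive wait-time function for $M$ crossing successive thresholds cannot be recursively dominated; a computable dominant would convert ordinary success into Schnorr-success via a savings-account transformation, a contradiction.

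For the constructive direction (i)$\Rightarrow$(ii), take high $A$ and an $A$-recursive $f$ dominating every recursive function. I would build $B$ stage by stage: at stage $s$, the dominating property lets us search, recursively in $A$ and within a bounded window, for an extension on which the $s$-th recursive martingale $M_s$ fails to double its capital. Interleaved with this diagonalization, bits of $A$ are encoded into reserved ``coding'' positions of $B$, giving $B \equiv_\T A$. To force $B$ into a universal Martin-L\"{o}f test --- hence non-ML-randomness --- the construction restricts extensions to lie within a pre-selected $\Sigma^0_1$ null class; this is feasible precisely because extensions are chosen by $A$, not computably. The construction for (i)$\Rightarrow$(iii) is parallel, diagonalizing against Schnorr-successes of recursive martingales while keeping $C$ inside a fixed $\Sigma^0_1$ class on which a specific recursive martingale succeeds.

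The main obstacle is the final left-r.e.\ clause. When $A$ is left-r.e., realizing $B$ and $C$ themselves as left-r.e.\ reals requires every modification during the construction to be a leftward-monotone movement of the approximation --- a $0$ flipped to a $1$ at some position $k$, with arbitrary overwriting permitted only at positions beyond $k$. The trick, reminiscent of the $\grow$ operator in Theorem~\ref{thm: ML Sigma_3-hard}, is to trigger every encoding change and every diagonalization move on a genuine left-r.e.\ increase of the approximation to $A$, parking the scratch work at high-indexed bits so that the inevitable resets cost nothing. Verifying simultaneously (a) computable or Schnorr randomness of $B$ or $C$, (b) Turing equivalence with $A$, (c) non-randomness at the next level, and (d) monotone approximability is the delicate point, but a finite-injury priority argument stratified by the dominating function $f$ should close the argument.
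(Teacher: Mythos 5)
This theorem is not proved in the paper at all: it is quoted from Nies, Stephan and Terwijn \citep{NST05} and used as a black box (via Fact~\ref{cor: Schnorr iff high} and Theorem~\ref{using NST}), so there is no in-paper argument to compare yours against. Measured against the published proof, your sketch does identify the right architecture: Martin's domination characterization of highness, wait-time functions for (ii)$\Rightarrow$(i) and (iii)$\Rightarrow$(i), and domination-driven diagonalization against recursive martingales for the constructive directions.

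Two concrete problems remain. First, in (ii)$\Rightarrow$(i) you assert that if $A$ is not high then ``some recursive $h$ would dominate $g$.'' Martin's theorem does not give that: non-highness says only that no $A$-recursive function dominates \emph{every} recursive function, so for your $B$-recursive wait-time $g$ you get a recursive $h$ with $h(n) \geq g(n)$ for infinitely many $n$, not almost all $n$. The argument can be repaired --- infinitely-often majorization suffices to make the truncated martingales $\widehat{M}_{h(n)}$ reach capital $2^n$ along $B$ infinitely often, provided each stage-$s$ approximation is itself a recursive martingale and you combine them with weights $2^{-n}$ --- but as written the quantifiers are wrong, and the same slip infects your (iii)$\Rightarrow$(i). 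Second, the directions (i)$\Rightarrow$(ii), (i)$\Rightarrow$(iii) and the left-r.e.\ clause are the substantive content of the theorem, and your treatment of them is a plan rather than a proof: ``search for an extension on which the $s$-th recursive martingale fails to double'' does not explain how one real defeats \emph{every} partial recursive martingale simultaneously (the published construction uses the dominating function to bound the convergence times of the partial martingales and then plays against a weighted sum of those that have settled), and the monotonicity needed to keep the approximation left-r.e.\ is asserted rather than arranged. If you intend a citation-level justification, cite \citep{NST05} as the paper does; if you intend a proof, those last two paragraphs need the actual construction.
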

\noindent
Furthermore, Downey and Griffith \cite{DG04, DH10} proved that every left-r.e.\ Schnorr random real is high.  Therefore
\begin{fact} \label{cor: Schnorr iff high}
A left-r.e.\ set $X$ is high $\iff$ $X$ Turing equivalent to a left-r.e.\ Schnorr random set $\iff$ $X$ is Turing equivalent to a left-r.e.\ computably random set.
 \end{fact}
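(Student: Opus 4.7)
The plan is to prove the three-way equivalence by chaining the two cited results: Theorem~\ref{thm: high separation} supplies the forward implications from highness to Turing equivalence with a left-r.e.\ random of the appropriate type, and the Downey--Griffith result stated just above supplies the converses. I would label the three clauses (a) $X$ is high, (b) $X \equiv_\T R_S$ for some left-r.e.\ Schnorr random $R_S$, and (c) $X \equiv_\T R_C$ for some left-r.e.\ computable random $R_C$, and establish (a)$\Rightarrow$(c)$\Rightarrow$(b)$\Rightarrow$(a), which suffices.

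For (a)$\Rightarrow$(c) and (a)$\Rightarrow$(b) I would just invoke Theorem~\ref{thm: high separation}, which says that if $X$ is left-r.e.\ and high then there exist left-r.e.\ sets $B$ and $C$, both Turing equivalent to $X$, with $B$ computably random (not Martin-L\"of random) and $C$ Schnorr random (not computably random). Taking $R_C = B$ and $R_S = C$ gives (c) and (b). Note that the additional refinement ``not Martin-L\"of'' and ``not computably random'' from Theorem~\ref{thm: high separation} is not used here, but it is what makes the forward implications nontrivial and it has already been done in \citep{NST05}.

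For (b)$\Rightarrow$(a) I would argue that by the Downey--Griffith theorem cited immediately before the Fact, every left-r.e.\ Schnorr random real is high, so $R_S$ is high, i.e.\ $R_S' \geq_\T \0''$; since $X \equiv_\T R_S$ implies $X' \equiv_\T R_S'$, we obtain $X' \geq_\T \0''$, so $X$ is high. For (c)$\Rightarrow$(b) I would observe that by Theorem~\ref{ML implies computable implies Schnorr} every computable random is Schnorr random, so the same $R_C$ witnesses (b) as well, and then (b)$\Rightarrow$(a) finishes the cycle.

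I do not expect any serious obstacle: the genuine mathematical content, namely the constructions in \citep{NST05} producing left-r.e.\ witnesses from a left-r.e.\ high degree and the Downey--Griffith implication from left-r.e.\ Schnorr randomness to highness, has already been established in the cited papers. The only point to be careful about is that both cited theorems are applied in the left-r.e.\ setting: Theorem~\ref{thm: high separation} is invoked in its final clause guaranteeing that $B$ and $C$ can be taken left-r.e., and the Downey--Griffith implication is precisely the statement that left-r.e.\ Schnorr randoms are high. Everything else is a routine application of $X \equiv_\T Y \Rightarrow X' \equiv_\T Y'$ and the implication chain in Theorem~\ref{ML implies computable implies Schnorr}.
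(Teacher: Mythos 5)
Your proof is correct and follows exactly the route the paper intends: the forward implications come from the final clause of Theorem~\ref{thm: high separation} (left-r.e.\ witnesses $B$ and $C$), and the converses come from the Downey--Griffith theorem that left-r.e.\ Schnorr randoms are high, together with the degree-invariance of highness and the implication ``computably random $\implies$ Schnorr random.'' The paper states this as a Fact with no written proof beyond ``Therefore,'' so your argument simply makes the same chain of citations explicit.
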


In his PhD thesis \citep{Sch82}, Schwarz characterized the complexity of the high r.e.\ degrees:
\begin{thm}[\citet{Sch82}, \citep{Soa87}]  \label{thm: schwarz's theorem}
In any acceptable universal r.e.\ numbering $W_0, W_1, W_2, \dotsc$, $\{e : \text{$W_e$ is high}\}$
is $\Sigma_5$-complete.
\end{thm}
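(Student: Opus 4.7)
The plan is to establish both $\Sigma_5$-containment and $\Sigma_5$-hardness for $H = \{e : W_e \text{ is high}\}$.

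For the upper bound, I would invoke Martin's characterization that a set $A$ is high iff some $A$-recursive function dominates every total recursive function, giving
\[
	e \in H \iff \exists i\, \forall j\,\Bigl[\Phi_i^{W_e} \text{ total} \,\wedge\, \bigl(\Phi_j \text{ not total} \,\vee\, (\exists N)(\forall n \geq N)\; \Phi_i^{W_e}(n) > \Phi_j(n)\bigr)\Bigr].
\]
Because $W_e$ is r.e., the predicate ``a finite binary string $\sigma$ is an initial segment of $W_e$'' is $\Delta_2$ in $e$, so ``$\Phi_i^{W_e}(n){\downarrow}$'' is $\Sigma_2$ in $e$, totality of $\Phi_i^{W_e}$ is $\Pi_3$, and the comparison ``$\Phi_i^{W_e}(n) > \Phi_j(n)$'' is $\Sigma_2$, making the domination clause $\Sigma_3$. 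The inner bracket is therefore $\Sigma_3$, the $\forall j$ layer yields $\Pi_4$, and the leading $\exists i$ gives $\Sigma_5$.

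For $\Sigma_5$-hardness, I would many-one reduce an arbitrary $\Sigma_5$ set $S$ to $H$. Fix a recursive $R$ with $x \in S \iff \exists y_1 \forall y_2 \exists y_3 \forall y_4 \exists y_5\; R(x,y_1,\ldots,y_5)$ and construct, uniformly in $x$, an r.e.\ set $W_{f(x)}$ together with a functional $\Gamma$ so that $\Gamma^{W_{f(x)}}$ is total and dominates every computable function iff $x \in S$. The construction is a tree-of-strategies priority argument of depth five mirroring the quantifier alternations: odd levels (corresponding to $\exists y_1, \exists y_3, \exists y_5$) search for witnesses and, upon finding apparent ones, enumerate markers into $W_{f(x)}$ to boost $\Gamma^{W_{f(x)}}(n)$ past a challenger $\Phi_j(n)$; even levels (corresponding to $\forall y_2, \forall y_4$) wait on their immediate successors to act infinitely often, cancelling and restarting subordinates whenever a universal-quantifier counterexample is discovered. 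Assigning priorities along the tree makes the true path of length five $\emptyset^{(4)}$-computable, and the construction ensures that along this path $\Gamma^{W_{f(x)}}$ is total and dominant precisely when every quantifier in the $\Sigma_5$ formula is satisfied.

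The main obstacle will be carrying out this five-level infinite-injury argument and verifying its correctness. I must ensure that $\Gamma^{W_{f(x)}}(n)$ remains convergent in the face of lower-priority enumeration into $W_{f(x)}$, arrange link structures so that cancellation at a $\forall$-level correctly restarts its subordinate $\exists$-level, and manage the transfer from the true-path analysis (under $\emptyset^{(4)}$) to the actual r.e.\ construction. The verification then follows the familiar template of $0^{(4)}$-priority arguments in the style of Soare's book. Once this is set up, the map $x \mapsto f(x)$ is recursive and witnesses $S \leq_\m H$.
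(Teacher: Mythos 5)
The paper does not prove this theorem at all---it is quoted from Schwarz's thesis via Soare's book---so there is no in-paper argument to compare against; your proposal must stand on its own. Your upper-bound computation via Martin's domination characterization is the standard one and lands correctly at $\Sigma_5$, with one caveat: as written, the domination clause $(\exists N)(\forall n\geq N)\,[\Phi_i^{W_e}(n)>\Phi_j(n)]$ with a $\Sigma_2$ matrix counts out to $\Sigma_4$, not $\Sigma_3$ (and then the whole formula would be $\Sigma_6$). You need to use the totality assertions already present in the formula to rewrite the comparison in $\Pi_2$ form (``for every $W_e$-correct computation the values compare''), so that the $\forall n$ absorbs into it and $(\exists N)$ yields $\Sigma_3$. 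This is routine but it is exactly the step you must make explicit.

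The hardness direction is where essentially all of the difficulty of Schwarz's theorem lives, and your proposal contains no actual construction: no requirements are written down, no strategy is specified beyond ``search for witnesses and enumerate markers,'' and the verification is deferred wholesale to ``the familiar template.'' That is a plan, not a proof, for a result that occupies a substantial part of a thesis. More importantly, there is a conceptual gap in the plan itself: arranging that $\Gamma^{W_{f(x)}}$ is total and dominant iff $x\in S$ only gives you the positive half of the reduction. If $x\in S$ then dominance of one $W_{f(x)}$-recursive function does make $W_{f(x)}$ high, but if $x\notin S$ you must guarantee that $W_{f(x)}$ is \emph{not} high, i.e.\ that \emph{no} $W_{f(x)}$-recursive function is dominant (equivalently, by Martin, that $W_{f(x)}'<_\T\emptyset''$). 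Defeating your single functional $\Gamma$ accomplishes nothing toward this; you need an entire second family of (infinitary, negative) requirements forcing non-highness---typically lowness-style preservation requirements---interleaved with the positive ones, and the interaction between the two families is precisely what makes this a genuinely hard priority argument rather than a routine depth-five bookkeeping exercise. Until those requirements and their conflicts are specified, the reduction $S\leq_\m\{e:W_e\text{ is high}\}$ has not been established.
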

\noindent
Using this Schwarz's theorem, we obtain the following enumeration result.
\begin{thm} \label{thm: no numbering for high}
Let $\C$ be a class of left-r.e.\ reals such that:
\begin{enumerate}[\scshape (i)]
\item Every member of $\C$ is high, and
\item every high r.e.\ set is Turing equivalent to some member of $\C$.
\end{enumerate}
Then for any universal left-r.e.\ numbering $\alpha$, $\{e : \alpha_e \in \C\}$ is not a $\Sigma_4$-set and hence is neither enumerable nor co-enumerable.
\end{thm}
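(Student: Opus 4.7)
My plan is to argue by contradiction, reducing the problem to Schwarz's Theorem~\ref{thm: schwarz's theorem}, which states that the index set of high r.e.\ sets is $\Sigma_5$-complete. Suppose $\{e : \alpha_e \in \C\}$ were $\Sigma_4$. Combining hypotheses~\textsc{(i)} and \textsc{(ii)} with the universality of $\alpha$ and the fact that highness is a degree property, I claim that for any acceptable universal r.e.\ numbering $W$,
\[
W_e \text{ is high} \iff (\exists d)\ [\alpha_d \in \C \andd W_e \equiv_\T \alpha_d].
\]
The forward direction uses \textsc{(ii)} to produce $X \in \C$ with $X \equiv_\T W_e$ and universality of $\alpha$ to index it as some $\alpha_d$; the reverse direction uses \textsc{(i)}. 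My goal will be to show the right-hand side is $\Sigma_4$, contradicting Schwarz.

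The heart of the argument is a uniform quantifier count for $W_e \equiv_\T \alpha_d$. Since $\alpha_d$ is left-r.e., the relation $\sigma \prec \alpha_d$ is $\Sigma_2$: it can be written as $(\exists s)(\forall t \geq s)\,[\alpha_{d,t}\restr |\sigma| = \sigma]$. Use-bounded convergence then gives that $\Phi_i^{\alpha_d}(n)\converge = v$ is $\Sigma_2$ uniformly in $i, d, n, v$. Combining with the $\Sigma_1/\Pi_1$ complexity of $W_e(n) = v$, the predicate $\Phi_i^{\alpha_d}(n) = W_e(n)$ stays $\Sigma_2$; universally quantifying over $n$ places $\Phi_i^{\alpha_d} = W_e$ at $\Pi_3$; and existentially quantifying over $i$ puts $W_e \leq_\T \alpha_d$ at $\Sigma_4$. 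The symmetric direction $\alpha_d \leq_\T W_e$ is handled analogously via $\sigma \prec W_e$ being $\Pi_1$. The conjunction with the assumed $\Sigma_4$ form of $\alpha_d \in \C$ and the outer $\exists d$ all remain within $\Sigma_4$, so $\{e : W_e \text{ is high}\}$ would be $\Sigma_4$, contradicting Schwarz.

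For the closing clause (neither enumerable nor co-enumerable), I will invoke the implication \textsc{(ii)}$\Rightarrow$\textsc{(i)} of Theorem~\ref{Sigma_3 iff numbering}, whose one-line proof does not actually use the shift-persistent or infiniteness hypotheses: a left-r.e.\ numbering of $\C$ forces $\{e : \alpha_e \in \C\}$ to be $\Sigma_3$, and a left-r.e.\ numbering of the complementary left-r.e.\ class forces it to be $\Pi_3$. Since $\Sigma_3 \cup \Pi_3 \subseteq \Sigma_4$, either scenario would contradict the main conclusion just established. The sole technical obstacle I anticipate is the bookkeeping for the quantifier count, and in particular justifying that the left-r.e.\ presentation of $\alpha_d$ keeps $\Phi_i^{\alpha_d}(n)\converge = v$ at $\Sigma_2$ rather than pushing it higher; once that is settled, the theorem follows immediately from Schwarz and the two structural hypotheses on $\C$.
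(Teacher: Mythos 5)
Your proposal is correct and follows essentially the same route as the paper: assume the index set is $\Sigma_4$, use the two hypotheses on $\C$ to express ``$W_e$ is high'' as a $\Sigma_4$ predicate over $e$, contradict Schwarz's $\Sigma_5$-completeness result (Theorem~\ref{thm: schwarz's theorem}), and derive non-(co-)enumerability from the \textsc{(ii)}$\Rightarrow$\textsc{(i)} direction of Theorem~\ref{Sigma_3 iff numbering}. The only (harmless) difference is that the paper streamlines the quantifier count by using just the one-directional reduction $\alpha_i=\Phi_j^{W_e}$ --- highness being upward closed under $\leq_\T$ --- whereas you count both directions of $\equiv_\T$; both land in $\Sigma_4$.
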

\begin{proof}
Let $\C$ be a class satisfying the hypothesis of the theorem, let $W$ be an acceptable universal r.e.\ numbering, let $\Phi$ denote a Turing functional, and suppose that
\[
	\alpha_i \in \C \iff (\exists n_1)\: (\forall n_2)\: (\exists n_3)\: (\forall n_4)\: [P(i,n_1,n_2,n_3,n_4)].
\]
for some recursive predicate $P$.

For convenience assume that whenever a computation $\Phi^{W_{e,t}}_j$ is injured, it is undefined for at least one stage; then
\begin{gather*}
\text{$W_e$ is high} \iff (\exists i,j) \left[\alpha_i \in \C \andd \alpha_i = \Phi^{W_e}_j \right] \\
\begin{split}
\iff (\exists i,j,n_1)\: &(\forall x,n_2)\: (\exists t)\: (\exists n_3)\: (\forall u > t)\: (\forall n_4) \\ & \left[P(i,n_1,n_2,n_3,n_4) \andd \alpha_{i,u}(x) =  \Phi_{j,u}^{W_{e,u}}(x) \right].
\end{split}
\end{gather*}
Thus $\{e : \text{$W_e$ is high}\}$ is a $\Sigma_4$-set, contrary to Theorem~\ref{thm: schwarz's theorem}.
\end{proof}

\begin{cor}\label{Neither}
Neither the Schnorr random sets nor the computably random sets reals are $\Sigma_4$ in any universal left-r.e.\ numbering.  Hence neither class nor its complement has a left-r.e.\ numbering.
\end{cor}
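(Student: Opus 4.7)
The plan is to derive Corollary~\ref{Neither} as a direct combination of Theorem~\ref{thm: no numbering for high}, Fact~\ref{cor: Schnorr iff high}, and the (ii)$\implies$(i) direction of Theorem~\ref{Sigma_3 iff numbering}. Let $\C$ denote either the class of left-r.e.\ Schnorr randoms or the class of left-r.e.\ computably randoms. First I would verify that $\C$ meets both hypotheses of Theorem~\ref{thm: no numbering for high}. Hypothesis (i), that every member of $\C$ is high, is the theorem of Downey and Griffith recalled in Fact~\ref{cor: Schnorr iff high} (using also that computable randomness implies Schnorr randomness, so the left-r.e.\ computable randoms form a subclass of the left-r.e.\ Schnorr randoms and are therefore likewise high). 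Hypothesis (ii), that every high r.e.\ set is Turing-equivalent to some member of $\C$, is exactly the left-r.e.\ case of Theorem~\ref{thm: high separation}. Invoking Theorem~\ref{thm: no numbering for high} then gives that $\{e:\alpha_e\in\C\}$ is not $\Sigma_4$ in any universal left-r.e.\ numbering $\alpha$.

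From this I would extract the enumeration statements in two short steps. Since $\Sigma_3\subseteq\Sigma_4$, the index set of $\C$ is \emph{a fortiori} not $\Sigma_3$. For the complement, note that if $\{e:\alpha_e\notin\C\}$ were $\Sigma_3$, then $\{e:\alpha_e\in\C\}$ would be $\Pi_3\subseteq\Sigma_4$, contradicting the previous conclusion; so the complementary index set is not $\Sigma_3$ either. The (ii)$\implies$(i) direction of Theorem~\ref{Sigma_3 iff numbering} then rules out left-r.e.\ numberings: this direction is proved by the one-line observation that, given a putative left-r.e.\ numbering $\beta$ of a class $\D$, the predicate $\alpha_e=\beta_d$ is arithmetically simple enough that $\{e:(\exists d)[\alpha_e=\beta_d]\}$ lies in $\Sigma_3$. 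Since neither $\C$ nor its complement has a $\Sigma_3$ index set, neither admits a left-r.e.\ numbering.

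There is no serious obstacle in the argument; everything reduces cleanly to previously established results. The only mild wrinkle worth flagging is that Theorem~\ref{Sigma_3 iff numbering} is stated under the hypothesis that the class consists of infinite left-r.e.\ reals containing a shift-persistent element, and the complement of $\C$ contains finite reals such as $0^\omega$. However, only the direction (ii)$\implies$(i) of that theorem is needed here, and that direction does not use either hypothesis: it is a direct calculation of arithmetic complexity. Hence the argument goes through uniformly for all four classes in the statement.
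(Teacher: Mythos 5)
Your proposal is correct and follows essentially the same route as the paper: verify the hypotheses of Theorem~\ref{thm: no numbering for high} via Fact~\ref{cor: Schnorr iff high}, conclude the index sets are not $\Sigma_4$, and then use the (\textsc{ii})$\implies$(\textsc{i}) direction of Theorem~\ref{Sigma_3 iff numbering} to rule out numberings of both classes and their complements. Your explicit remark that this direction of Theorem~\ref{Sigma_3 iff numbering} does not need the shift-persistence or infiniteness hypotheses is a worthwhile clarification that the paper's two-line proof leaves implicit.
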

\begin{proof}
By Fact~\ref{cor: Schnorr iff high}, the left-r.e.\ Schnorr random sets and the left-r.e.\ computably random sets satisfy the hypothesis of the Theorem~\ref{thm: no numbering for high}.  Apply Theorem~\ref{Sigma_3 iff numbering}.
\end{proof}

It remains to characterize the hardness of computable random sets and Schnorr random sets in an acceptable universal left-r.e.\ numbering.  For the remainder of this paper, we fix an acceptable universal left-r.e.\ numbering $\alpha$ and an acceptable universal r.e.\ numbering $W$.  The \emph{principal function} of a set $A = \{a_0 < a_1 < a_2 < \dotsc\}$ is given by $n \mapsto a_n$; we write $p_{A}(n)=a_{n}$. We will be particularly interested in the principal functions of co-r.e.\ sets, so we use the abbreviation $p_{\complement{e}}$ for $p_{\complement{W}_e}$.  We say that a function $f:\omega\to\omega$ is \emph{dominating} if it dominates all recursive functions.

\begin{thm}\label{using NST}
There is a Turing reduction procedure $\Phi$ and a recursive function $g$ such that for all $e$,
\begin{enumerate}[\scshape (i)]
\item $\Phi^{p_{\complement{e}}}$ is a left-r.e. real,
\item $\alpha_{g(e)}=\Phi^{p_{\complement{e}}}$, and
\item $\Phi^{f}$ is computably random if $f$ is dominating.
\end{enumerate}
\end{thm}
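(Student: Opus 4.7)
The plan is to extract from the Nies--Stephan--Terwijn proof of Theorem~\ref{thm: high separation} a single Turing functional $\Phi$ whose only use of its oracle is to majorise run-times and whose output is always left-r.e. Fix an effective enumeration $M_0, M_1, M_2, \ldots$ of all partial recursive martingales together with stage-by-stage approximations $M_{i,s}$. I will build $\Phi^f$ as a non-decreasing sequence of dyadic rationals $q_0(f) \leq q_1(f) \leq \cdots$ whose stage-$s$ value depends only on $f \restr s$, so that $\Phi^f$ is automatically left-r.e. in $f$. Substituting the recursive stage-$s$ lower approximation $p_{\complement{W}_{e,s}}$ of $p_{\complement{e}}$ for $f$ gives a recursive left-r.e.\ approximation to $\Phi^{p_{\complement{e}}}$, from which the $s$-$m$-$n$ theorem applied to the acceptable numbering $\alpha$ yields the recursive function $g$ of items (i) and (ii).

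For item (iii) I would organise the bit positions of $\Phi^f$ into disjoint blocks $B_0, B_1, B_2, \ldots$ of rapidly increasing length, with block $B_n$ assigned to defending against $M_{i(n)}$ for some indexing $i$ in which every martingale index appears infinitely often. When stage $s = n$ is reached, we use $f(n)$ as a time bound and attempt to compute every value $M_{i(n),f(n)}(\sigma\tau)$ where $\sigma$ is the prefix committed in blocks $B_0, \ldots, B_{n-1}$ and $\tau$ ranges over extensions of length $|B_n|$. If all these values converge within the bound, the fairness condition guarantees the existence of a $\tau$ with $M_{i(n)}(\sigma\tau) \leq M_{i(n)}(\sigma)$, and we commit to such a $\tau$; otherwise the block is left filled with zeros. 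Since the block was initialised to zeros, committing to $\tau$ is a lexicographic (hence left-r.e.) increase of the approximation.

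When $f$ dominates every recursive function, for each fixed $i$ there is a stage beyond which $f(n)$ exceeds the common run-time of $M_i$ on all extensions considered at stage $n$. Hence, for all but finitely many blocks $B_n$ serving $i$, the commit step succeeds and enforces $M_i(\sigma\tau) \leq M_i(\sigma)$ across that block. A standard savings argument (telescoping the increments of $M_i$ along $X = \Phi^f$) then shows $\limsup_n M_i(X \restr n) < \infty$, so no recursive martingale succeeds on $\Phi^f$, which gives computable randomness.

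The main obstacle I expect is ensuring that later commits in blocks $B_{n'}$ do not retroactively force modifications to bits already committed in earlier blocks, which would either break the block structure or violate the left-r.e.\ property. I would resolve this by inserting between consecutive blocks a zero-padding of length large enough that all future increments remain strictly to the right of the committed portion of $B_n$; under this discipline each commit is a single $0 \mapsto 1$ flip with prescribed fill-in to the right, hence a genuine left-r.e.\ move, and the per-martingale boundedness argument sketched above goes through.
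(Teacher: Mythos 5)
The step that fails is the verification of item \textsc{(iii)}. You defend against $M_i$ only on the blocks $B_n$ with $i(n)=i$, and on those blocks you only enforce $M_{i}(\sigma\tau)\le M_{i}(\sigma)$ at the block's end. Neither restriction is harmless. First, on the infinitely many interleaved blocks assigned to \emph{other} martingales, nothing constrains $M_i$: over a block of length $|B_n|$ a martingale can multiply its capital by as much as $2^{|B_n|}$, so the telescoping product of ratios along $X=\Phi^f$ need not be bounded even though every factor coming from an $i$-block is eventually at most $1$; $M_i$ can still succeed by winning on everybody else's blocks. Second, $\limsup_n M_i(X\restr n)$ ranges over all positions, not just block boundaries, so even inside its own blocks $M_i$ may spike to $2^{|B_n|}M_i(\sigma)$ at an interior position before being pushed back down at the boundary; with $|B_n|\to\infty$ these spikes are unbounded. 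So the ``standard savings argument'' you invoke does not go through, and computable randomness of $\Phi^f$ for dominating $f$ is not established.

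The repair is to defeat all currently \emph{activated} martingales simultaneously at every bit, which is essentially what the construction of Nies, Stephan and Terwijn does --- and the paper's own proof of this theorem consists of nothing more than citing that construction (Theorem~\ref{thm: high separation}, (I)$\Rightarrow$(II), r.e.\ case, of \citep{NST05}). Concretely: when $M_i$ first appears total within the time bound $f(n)$, activate it with a rational weight $\epsilon_i$ so small that $\epsilon_i M_i(\sigma)\le 2^{-i}$ for the current prefix $\sigma$; thereafter, at each bit choose the successor $b$ with $N(\sigma' b)\le N(\sigma')$, where $N=\sum_{i\text{ active}}\epsilon_i M_i$ (such a $b$ exists by the fairness condition applied to the finite sum $N$, which is itself a martingale). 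Then $N$ is bounded by $N(\varepsilon)+\sum_i 2^{-i}$ on every prefix of $X$, so each total recursive $M_i$ --- eventually activated because $f$ dominates its run time --- satisfies $\limsup_n M_i(X\restr n)\le \epsilon_i^{-1}\cdot O(1)$. Your block architecture, the use of $f(n)$ as a time bound, the substitution of the monotone approximation $p_{\complement{W_{e,s}}}$ to secure left-r.e.-ness, and the appeal to acceptability of $\alpha$ to obtain $g$ are all consistent with the intended construction; but without the weighted-sum, bit-by-bit defense, item \textsc{(iii)} fails.
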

\begin{proof}
This fact follows from the proof of Nies, Stephan, Terwijn \cite[Theorem 4.2, (I) implies (II), r.e.\ case]{NST05} which appears as Theorem~\ref{thm: high separation} in this paper.
\end{proof}

\noindent
A set $A$ is \emph{low} if $A'\le_{\T}\0'$, and a function is \emph{low} if it is computable from a low set.  A function $f$ is \emph{diagonally non-recursive (DNR)} if for some numbering $\phe$ and every $e$, the value $\phe_e(e)$, if defined, differs from $f(e)$.

\begin{lemma}\label{observe}
A low left-r.e.\ set cannot compute a Schnorr random.
\end{lemma}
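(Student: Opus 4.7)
).}

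The plan is a proof by contradiction. Suppose $A$ is low and left-r.e.\ and that $B\le_{\T} A$ is Schnorr random. First, I would promote $B$ to a Martin-L\"{o}f random, then exploit the restrictive structure forced on Martin-L\"{o}f randoms of non-high degree, and finally collide with Arslanov's completeness criterion after passing from $A$ to an r.e.\ set in its degree.

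To carry out Step~1, observe that from $B\le_{\T}A$ and $A'\le_{\T}\emptyset'$ we get $B'\le_{\T}A'\le_{\T}\emptyset'$, so $B$ is itself low and in particular of non-high degree. Now apply Theorem~\ref{thm: high separation} to the Turing degree of $B$: the contrapositive of clause~\textsc{(iii)} says that if the degree of $B$ is not high then every Schnorr random in that degree is also computably random, so $B$ itself is computably random; and the contrapositive of clause~\textsc{(ii)} then upgrades this to $B$ being Martin-L\"{o}f random.

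For Step~2, I would invoke Stephan's dichotomy that every Martin-L\"{o}f random set is either of high Turing degree or of PA-degree (that is, computes a $\{0,1\}$-valued diagonally non-recursive function). Since $B$ is not high, $B$ must be of PA-degree, hence of DNR degree. Consequently $A$, which computes $B$, is also of DNR degree.

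For the final step, I would use the classical fact that the Turing degrees of left-r.e.\ reals coincide with the r.e.\ Turing degrees to pick an r.e.\ set $W$ with $W\equiv_{\T}A$. Then $W$ is an r.e.\ set of DNR degree, and Arslanov's completeness criterion forces $W\equiv_{\T}\emptyset'$; this makes $A$ high, contradicting lowness. The main obstacle I foresee is nailing down the exact external citations (Stephan's dichotomy and the identification of left-r.e.\ with r.e.\ Turing degrees); both are standard but sit outside this paper, and if one prefers to stay self-contained one could instead replace Step~2 with a direct construction of a computable martingale that Schnorr-succeeds on $B$ using the left-r.e.\ approximation of $A$ together with lowness, although that more constructive route looks considerably more technical than the citation-based argument above.
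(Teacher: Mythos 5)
Your overall skeleton matches the paper's proof exactly: use lowness of $A$ to conclude the Schnorr random $B$ is not high, upgrade $B$ to Martin-L\"{o}f random via Theorem~\ref{thm: high separation}, extract a DNR function, pass to an r.e.\ set Turing equivalent to the left-r.e.\ real $A$, and apply the Arslanov completeness criterion to contradict lowness. Your Steps~1 and~3 are correct and are precisely what the paper does (the paper makes the r.e.-degree claim explicit by noting $A$ is truth-table equivalent to the r.e.\ set $\{\sigma : \sigma^{\frown}0^\omega \leq A\}$).

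However, Step~2 as written is wrong. Stephan's theorem on Martin-L\"{o}f randoms and PA degrees says the opposite of what you claim: a Martin-L\"{o}f random set of PA degree must compute $\0'$, so the correct dichotomy is ``every Martin-L\"{o}f random either computes $\0'$ or is \emph{not} of PA degree.'' Your $B$ is low, hence does not compute $\0'$, hence is \emph{not} of PA degree, and you cannot conclude that it computes a $\{0,1\}$-valued DNR function. (Indeed, if the dichotomy as you state it were true, every non-high Martin-L\"{o}f random would be of PA degree and hence, by Stephan, above $\0'$ and high after all --- so every Martin-L\"{o}f random would be high, which is false.) The conclusion you actually need --- that $B$, and hence $A$, computes \emph{some} DNR function, not necessarily $\{0,1\}$-valued --- is still true, but it follows from Ku\v{c}era's earlier theorem that every Martin-L\"{o}f random set computes a DNR function; that is exactly the citation the paper uses, and Arslanov's completeness criterion only requires DNR degree, not PA degree. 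With that one substitution your argument goes through and coincides with the paper's proof.
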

\begin{proof}
Suppose such a real $A$ computes a Schnorr random set $X$.  Since $X$ is not high, $X$ must also be Martin-L\"{o}f random (by Theorem~\ref{thm: high separation}).  Ku\v{c}era showed that every Martin-L\"{o}f random set computes a DNR function \cite{Kuc85}, \cite[Theorem 6]{KMS06}, so $A$ computes a DNR function. Moreover $A$ has r.e.\ Turing degree because it is truth-table equivalent to the r.e.\ set $\{\sigma: \sigma^{\frown} 0^\omega \leq A\}$.  An r.e.\ set computes a DNR-function if and only if the set is Turing complete \cite{Ars81}\cite{JLSS89}\cite[Corollary 9]{KMS06}\cite{KMS11}, hence $A \equiv_\T \0'$.  This contradicts the fact that $A$ is low.
\end{proof}

\noindent
An r.e.\ set $A$ is \emph{maximal} if for each r.e.\ set $W$ with $A\subseteq W$, either $\omega\setminus W$ or $W\setminus A$ is finite. Friedberg \cite{Fri58} proved that maximal sets exist.
\begin{thm}
For every $A \in \Pi_4$ and every acceptable left-r.e.\ numbering~$\alpha$, there exists a recursive function $f$ such that for all $e$,
\begin{align}
e \in A &\implies \text{$\alpha_{f(e)}$ is computably random;}  \label{pi four eq}\\
e \notin A &\implies \text{$\alpha_{f(e)}$ is not Schnorr random.}
\end{align}
\end{thm}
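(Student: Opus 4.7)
The plan is to model the argument on Theorem~\ref{thm: ML Sigma_3-hard}---which pushed a single $\Sigma_3$-predicate into Martin-L\"of versus non-Schnorr randomness---by replacing the $\Omega$-approximation there with the Nies--Stephan--Terwijn betting construction invoked in Theorem~\ref{using NST}, and replacing the $\Sigma_3$-representation with one suited to $\Pi_4 = \forall\,\Sigma_3$.  The rough idea is: at each index $m$ we try to kill the $m$-th partial recursive martingale; if the whole tower of $\Pi_4$-witnesses behaves well (so $e \in A$), every kill succeeds and $\alpha_{f(e)}$ turns out computably random, while if some layer fails (so $e \notin A$), the fall-back of Lemma~\ref{lots of zeros implies nonrandom} delivers non-Schnorr-randomness.

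First, I would apply the $\Sigma_3$-Representation Theorem~\ref{thm: Sigma_3 representation} to the inner $\Sigma_3$-predicate in the $\Pi_4$-description of $A$.  Writing $e \in A \iff (\forall m)\,Q(e,m)$ with $Q \in \Sigma_3$, I obtain a recursive $h$ such that $Q(e,m) \iff (\forall^\infty n)\,[W_{h(e,m,n)} = \omega]$.  A routine padding trick (replace $m$ by $\pair{m,i}$ with $i$ dummy) then ensures that if $e \notin A$, \emph{infinitely many} indices $m$, not just one, witness $\neg Q(e,m)$.

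Next, I would construct $\alpha_{f(e)}$ by dovetailing phases $\tau^e_0, \tau^e_1, \tau^e_2, \ldots$, with phase $m$ assigned a dedicated recursive slice of the bit positions of $\alpha_{f(e)}$ (say positions $\pair{m,0}, \pair{m,1}, \ldots$), bound to the $m$-th partial recursive martingale $\phi_m$, and watching the sequence $W_{h(e,m,0)}, W_{h(e,m,1)}, \ldots$.  As long as the permission signals $W_{h(e,m,n)} = \omega$ keep arriving, phase $m$ extends $\tau^e_m$ one bit at a time, choosing the bit $b$ satisfying $\phi_m(\tau b) \le \phi_m(\tau)$ for its current prefix $\tau$---the NST-style play against $\phi_m$.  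If phase $m$ ever stalls on a missing signal, it instead dumps a block of $(L+2) \cdot 2^L$ ones into its slice, with $L$ the current length of $\alpha_{f(e)}$, activating the hypothesis of Lemma~\ref{lots of zeros implies nonrandom}.  The $\grow$ operator from Theorem~\ref{thm: ML Sigma_3-hard} is invoked whenever a slice expands, so that the overall approximation to $\alpha_{f(e)}$ remains left-r.e.

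Verification splits on whether $e \in A$.  If yes, no phase stalls; each $\tau^e_m$ prevents $\phi_m$ from succeeding and $\alpha_{f(e)}$ is therefore computably random.  If no, infinitely many phases stall and Lemma~\ref{lots of zeros implies nonrandom} immediately yields non-Schnorr-randomness.  The main obstacle is the phase-coordination in the positive case: defeating $\phi_m$ on phase $m$'s recursive slice must translate into $\phi_m$ failing on the full $\alpha_{f(e)}$ after the dovetailing interleave.  The natural remedy is to observe that a recursive martingale succeeding on $\alpha_{f(e)}$ would, after inverting the recursive permutation that assigns phase $m$ to its slice, yield a recursive martingale succeeding on some individual $\tau^e_m$; then, by the standard padding of the partial recursive martingale list so that every recursive martingale occurs cofinally often as some $\phi_m$, we obtain the contradiction required to close the argument.
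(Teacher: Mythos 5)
There are two genuine gaps here, and each is fatal on its own. First, defeating the $m$-th martingale only on a dedicated recursive \emph{slice} of positions does not make the whole real computably random. A recursive martingale is free to place its bets on positions outside its assigned slice; for instance, a martingale that bets only on the positions of slice $0$ is constrained solely by phase $0$'s play against $\phi_0$, which is in general a different martingale, so nothing stops it from winning there. Your proposed remedy is the false step: a martingale succeeding on $\alpha_{f(e)}$ does \emph{not} restrict to a martingale succeeding along some single slice, because its capital can grow by exploiting correlations across many slices while gaining nothing along any one of them. (This is essentially the reason Church stochasticity is strictly weaker than computable randomness.) To defeat all recursive martingales you must, at each position, beat a weighted sum of all martingales active so far, which destroys the independence of your phases and is where the real work lies. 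Second, and more structurally: by Downey--Griffiths (Fact~\ref{cor: Schnorr iff high}), every left-r.e.\ computably random real is high, so in the case $e \in A$ your construction must somehow produce a \emph{high} left-r.e.\ real. Your permission signals $W_{h(e,m,n)} = \omega$ carry no domination information whatsoever -- these sets can be enumerated arbitrarily fast -- so the construction as described is an effective procedure with no source of highness, and cannot output a computably random left-r.e.\ real. This is precisely why the template of Theorem~\ref{thm: ML Sigma_3-hard} does not lift from $\Sigma_3$/Martin-L\"of to $\Pi_4$/computable randomness: there you could splice in the already-given high real $\Omega$, whereas here the randomness must be built, and building it requires a dominating function.

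The paper's proof supplies the missing highness by a completely different route: it reduces the $\Pi_4$ set to the index set of maximal sets, invokes Martin--Tennenbaum to get that the principal function $p_{\complement{h(e)}}$ of the complement of a maximal set dominates every recursive function, and then feeds that dominating function into the relativized Nies--Stephan--Terwijn construction (Theorem~\ref{using NST}) to obtain a computably random left-r.e.\ real uniformly. In the negative case, non-maximality gives lowness of $p_{\complement{h(e)}}$, and Lemma~\ref{observe} (a low left-r.e.\ real cannot be Schnorr random, again via highness) finishes the argument -- note that the paper does not use Lemma~\ref{lots of zeros implies nonrandom} here at all. If you want to salvage your approach, you would need to replace the permission signals with something that yields a dominating function exactly when $e \in A$, which is in effect rediscovering the maximal-set reduction.
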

\begin{proof}
Let us fix a $\Pi_{4}$-complete set $A$; By \cite[XII.\ Exercise 4.26]{Soa87}, there is a recursive function $h$ such that
\[
	e\in A\quad\iff\quad W_{h(e)}\text{ is maximal}\quad\iff\quad W_{h(e)}\text{ is not low}.
\]
Martin and Tennenbaum showed that the principal function of the complement of a maximal set dominates all recursive functions  \cite[XI.\ Proposition~1.2]{Soa87}.  Using this result together with the function $g$ and operator $\Phi$ given by Theorem~\ref{using NST},
\[
	W_{h(e)}\text{ is maximal}\quad\implies\quad \text{$p_{\complement{h(e)}}$ is dominating}
\]
\[
	\quad\implies\quad \text{$\alpha_{g[h(e)]}=\Phi^{p_{\complement{h(e)}}}$ is computably random},
\]
and by Lemma~\ref{observe} with Theorem~\ref{using NST}\nlb\textsc{(i)},
\[
	\text{$W_{h(e)}$ is not maximal}\quad\implies\quad \text{$p_{\complement{{h(e)}}}$ is low}
\]
\[
	\quad\implies\quad \alpha_{g[h(e)]}=\text{$\Phi^{p_{\complement{h(e)}}}$ is not Schnorr random}.
\]
The function $f = g\circ h$ witnesses the conclusion of this theorem.
\end{proof}
\noindent
Note that if we replaced ``computably random'' with ``Martin-L\"{o}f random'' in \eqref{pi four eq}, we would obtain a characterization of $\Sigma_3$ sets rather than $\Pi_4$ sets (care of Theorem~\ref{thm: ML Sigma_3-hard}).  Since every computable random is Schnorr random (Theorem~\ref{ML implies computable implies Schnorr}), we obtained an optimal hardness result:
\begin{cor} \label{complete}
In any acceptable universal left-r.e.\ numbering, both the indices of the Schnorr random sets and the indices of the computably random sets are $\Pi_{4}$-complete.
\end{cor}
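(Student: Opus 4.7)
The plan splits naturally into a hardness half and an upper-bound half. The hardness half is essentially free from the preceding theorem: for any $\Pi_4$-complete set $A$ (for example, the set of indices of maximal r.e.\ sets, as used in the proof of that theorem), its recursive function $f$ satisfies $e\in A \iff \alpha_{f(e)}$ is computably random, and also $e\in A \iff \alpha_{f(e)}$ is Schnorr random. The first equivalence is the content of the preceding theorem. For the second, the forward direction uses Theorem~\ref{ML implies computable implies Schnorr} (computable random implies Schnorr random), and the contrapositive of the preceding theorem gives the reverse. Hence both index sets are $\Pi_4$-hard.

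For the upper bound I need to show each index set lies in $\Pi_4$. The only arithmetic fact I will use about left-r.e.\ reals is that $(e,n)\mapsto\alpha_e\restr n$ is uniformly $\Delta_2$, because each bit of $\alpha_e$ is determined in the limit by the monotone left-r.e.\ approximation and $\0'$ can certify stabilization. For computable randomness, ``some recursive martingale succeeds on $\alpha_e$'' reads
\[
  (\exists i)\bigl[\phe_i \text{ is a total martingale} \,\wedge\, (\forall k)(\exists^\infty n)[\phe_i(\alpha_e\restr n)\ge k]\bigr].
\]
The first conjunct is $\Pi_2$ in $i$, and the second conjunct is $\Pi_3$ in $(e,i)$ because its atomic predicate is $\Delta_2$. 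Together this is $\Sigma_4$ in $e$, so computable randomness is $\Pi_4$.

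For Schnorr randomness I will first use the equivalent formulation: $\alpha_e$ is Schnorr random iff for every recursive martingale $M$ and every unbounded total recursive $g$, $(\forall^\infty n)[g(n) > M(\alpha_e\restr n)]$. (If $M$ does not succeed the inequality holds automatically, so this really is equivalent to the official definition.) This merely adds one more universal $\Pi_2$-guarded quantifier over an index for $g$, leaving the final count at $\Pi_4$. The main obstacle in the argument is keeping this quantifier count honest: the predicates ``$\phe_i$ is a martingale'' and ``$\phe_j$ is total and unbounded'' must be recognized as $\Pi_2$ rather than anything worse, and the inner numerical comparisons on $\phe_i(\alpha_e\restr n)$ must stay at the $\Delta_2$ level. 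Once this is done, the $\Pi_4$-hardness from the first paragraph together with the $\Pi_4$ membership from the second yields $\Pi_4$-completeness for both index sets.
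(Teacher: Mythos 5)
Your hardness argument is exactly the paper's: the theorem immediately preceding the corollary reduces an arbitrary $\Pi_4$ set to both index sets at once, because Theorem~\ref{ML implies computable implies Schnorr} (computably random $\implies$ Schnorr random) closes the gap between the two halves of that theorem's conclusion. The paper leaves the $\Pi_4$ upper bound implicit, so spelling out the quantifier count is a reasonable addition, and your count for computable randomness is fine: totality plus the fairness condition is $\Pi_2$, the prefixes $\alpha_e\restr n$ are uniformly $\Delta_2$ from the left-r.e.\ approximation, and success is $\Pi_3$, giving $\Sigma_4$ for the non-computably-randoms.

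The genuine gap is your ``equivalent formulation'' of Schnorr randomness. With $g$ merely unbounded rather than nondecreasing, the condition $(\forall^\infty n)\,[g(n) > M(\alpha_e\restr n)]$ is \emph{not} automatic when $M$ fails to succeed: take $M\equiv 1$ and $g(n)=1$ for even $n$, $g(n)=n$ for odd $n$. Then $g$ is total, recursive and unbounded, yet $g(n)\le M(X\restr n)$ for every even $n$ and every real $X$, so your reformulation declares that no real is Schnorr random; the parenthetical justification (``if $M$ does not succeed the inequality holds automatically'') is false. The repair is easy and costs nothing in the count: keep the success clause from Definition~\ref{cr} as an extra conjunct, so that non-Schnorr-randomness reads $(\exists i,j)\,[\,\phe_i \text{ total martingale} \wedge \phe_j \text{ total and unbounded} \wedge (\forall k)(\exists^\infty n)[\phe_i(\alpha_e\restr n)\ge k] \wedge (\exists^\infty n)[\phe_j(n)\le \phe_i(\alpha_e\restr n)]\,]$. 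The two new-style conjuncts are each $\Pi_3$ with $\Delta_2$ matrices, so the whole expression is still $\Sigma_4$ and Schnorr randomness is $\Pi_4$. (Alternatively you could quantify only over nondecreasing unbounded $g$, but then you would owe a short argument that this defines the same class as the paper's definition.)
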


We summarize our main results in Table~\ref{vege-table}.  A theorem in a forthcoming paper \citep{ST10b} states that every $\0'$-recursive 1-generic set has a co-r.e.\ indifferent set which is retraceable by a recursive function.  It follows that for each the families of randoms listed in Table~\ref{vege-table}, there exists a universal left-r.e.\ numbering which makes the set of the indices for that class 1-generic.  Therefore we cannot obtain any arithmetic hardness results for index sets in the general case of universal left-r.e.\ numberings.

\begin{table}[h]
\begin{center}
\begin{tabular}{|c|c|c|c|c|}
\hline
\textsf{Left-r.e.\ family}  	& \textsf{Complexity} 							& \textsf{Hardness*}				\\ \hline
Martin-L\"{o}f randoms   	&  $\Sigma_3 -\Pi_3$ [\ref{cor: ML are Sigma_3 - Pi_3}] 	& $\Sigma_3$-hard [\ref{cor: ML randoms are Sigma_3-hard}]  \\ \hline
computable randoms       &   $\Pi_4 - \Sigma_4$ [\ref{Neither}] 				& $\Pi_4$-hard [\ref{complete}] 		\\ \hline
Schnorr randoms 		& $\Pi_4 - \Sigma_4$ [\ref{Neither}] 					& $\Pi_4$-hard [\ref{complete}] 		\\ \hline
Kurtz randoms 			& $\Pi_3 - \Sigma_3$ [\ref{not bi-hard}] 	& $\Pi_3$-hard [\ref{bi-hard}] 	\\ \hline
bi-immune sets 		& $\Pi_3 - \Sigma_3$ [\ref{not bi-hard}] 				& $\Pi_3$-hard [\ref{bi-hard}] 		\\ \hline
\end{tabular}
\caption{Complexities listed hold for any universal left-r.e.\ numbering. *Hardness results are for \emph{acceptable} universal left-r.e.\ numberings.}
\label{vege-table}
\end{center}
\end{table}

We can separate most of the adjacent left-r.e\ classes in Table~\ref{vege-table} simply by observing differences in arithmetic complexity (and using the well-known result Theorem~\ref{ML implies computable implies Schnorr}).  The remaining separations follow from Theorem~\ref{thm: high separation} and Proposition~\ref{bhi not kr}.  All of these separations were previously known, with the possible exception of a left-r.e.\ Kurtz random which is not bi-immune.

Among the families Table~\ref{vege-table}, only the Martin-L\"{o}f randoms have a left-r.e.\ numbering, and among the complementary families only the Kurtz non-randoms and non-bi-immune sets do (by Theorem~\ref{Sigma_3 iff numbering}).

\section{Expanding the vocabulary}
In Section~\ref{sec: acceptable}, we identified left-r.e.\ sets as limit-recursive sets with recursive approximations of a special form.  However there are other easy to describe limit-recursive sets which are Martin-L\"{o}f random but not left-r.e.  For example, $\{x : 2x \in \Omega\}$ is Martin-L\"{o}f random and low by van Lambalgen's Theorem \cite[Corollary~3.4.11]{N09} but not left-r.e.\ (as left-r.e.\ Martin-L\"{o}f random sets are weak truth-table complete \cite[Corollary~3.2.31]{N09}).  See \cite[Proposition~13]{JST11} for an elementary explanation why $\{x : 2x \in \Omega \}$ and $\{x : 2x+1 \in \Omega \}$ cannot both have left-r.e.\ approximations.
\begin{ques}
If $A = a_0 < a_1 < a_2 <\dotsc$ is an infinite r.e.\ (co-r.e.) set, and $\Omega$ is a left-r.e.\ Martin-L\"{o}f random set, is the set
\begin{equation}\label{r.e. slice}
\Omega(a_0)\Omega(a_1)\Omega(a_2)\dotsc
\end{equation}
Martin-L\"{o}f random?  If not, which classes of sequences  of the form \eqref{r.e. slice} have numberings?
\end{ques}

\bibliographystyle{acmtrans}
\bibliography{nonrandom_enumeration}

\begin{thebibliography}{}

\bibitem[\protect\citeauthoryear{Arslanov}{Arslanov}{1981}]{Ars81}
{\sc Arslanov, M.~M.} 1981.
\newblock Some generalizations of a fixed-point theorem.
\newblock {\em Izv. Vyssh. Uchebn. Zaved. Mat.\/}~5, 9--16.

\bibitem[\protect\citeauthoryear{Bienvenu, Stephan, and Teutsch}{Bienvenu
  et~al\mbox{.}}{2010}]{BST10}
{\sc Bienvenu, L.}, {\sc Stephan, F.}, {\sc and} {\sc Teutsch, J.} 2010.
\newblock How powerful are integer-valued martingales?
\newblock In {\em Programs, Proofs, Processes (CiE 2010)}. Lecture Notes in
  Computer Science. Springer-Verlag, Berlin, Heidelberg, 59--68.

\bibitem[\protect\citeauthoryear{Brodhead and Kjos-Hanssen}{Brodhead and
  Kjos-Hanssen}{2009}]{BK09}
{\sc Brodhead, P.} {\sc and} {\sc Kjos-Hanssen, B.} 2009.
\newblock Numberings and randomness.
\newblock In {\em Mathematical Theory and Computational Practice}. Lecture
  Notes in Computer Science, vol. 5635. Springer-Verlag, Berlin, Heidelberg,
  49--58.

\bibitem[\protect\citeauthoryear{Chaitin}{Chaitin}{1987}]{Cha87}
{\sc Chaitin, G.~J.} 1987.
\newblock Incompleteness theorems for random reals.
\newblock {\em Adv. in Appl. Math.\/}~{\em 8,\/}~2, 119--146.

\bibitem[\protect\citeauthoryear{Downey and Griffiths}{Downey and
  Griffiths}{2004}]{DG04}
{\sc Downey, R.~G.} {\sc and} {\sc Griffiths, E.~J.} 2004.
\newblock Schnorr randomness.
\newblock {\em J. Symbolic Logic\/}~{\em 69,\/}~2, 533--554.

\bibitem[\protect\citeauthoryear{Downey, Griffiths, and Reid}{Downey
  et~al\mbox{.}}{2004}]{DGR04}
{\sc Downey, R.~G.}, {\sc Griffiths, E.~J.}, {\sc and} {\sc Reid, S.} 2004.
\newblock On {K}urtz randomness.
\newblock {\em Theoret. Comput. Sci.\/}~{\em 321,\/}~2-3, 249--270.

\bibitem[\protect\citeauthoryear{Downey and Hirschfeldt}{Downey and
  Hirschfeldt}{2010}]{DH10}
{\sc Downey, R.~G.} {\sc and} {\sc Hirschfeldt, D.~R.} 2010.
\newblock {\em Algorithmic randomness and complexity}.
\newblock Theory and Applications of Computability. Springer, New York.

\bibitem[\protect\citeauthoryear{Friedberg}{Friedberg}{1958}]{Fri58}
{\sc Friedberg, R.~M.} 1958.
\newblock Three theorems on recursive enumeration. {I}. {D}ecomposition. {II}.
  {M}aximal set. {III}. {E}numeration without duplication.
\newblock {\em J. Symb. Logic\/}~{\em 23}, 309--316.

\bibitem[\protect\citeauthoryear{Hitchcock, Lutz, and Terwijn}{Hitchcock
  et~al\mbox{.}}{2007}]{HLT07}
{\sc Hitchcock, J.~M.}, {\sc Lutz, J.~H.}, {\sc and} {\sc Terwijn, S.~A.} 2007.
\newblock The arithmetical complexity of dimension and randomness.
\newblock {\em ACM Trans. Comput. Logic\/}~{\em 8,\/}~2, 13.

\bibitem[\protect\citeauthoryear{Jain, Stephan, and Teutsch}{Jain
  et~al\mbox{.}}{2011}]{JST11}
{\sc Jain, S.}, {\sc Stephan, F.}, {\sc and} {\sc Teutsch, J.} 2011.
\newblock Closed left-r.e. sets.
\newblock In {\em Theory and Applications of Models of Computation (TAMC
  2011)}, {M.~Ogihara} {and} {J.~Tarui}, Eds. Lecture Notes in Computer
  Science, vol. 6648. Springer Berlin / Heidelberg, 218--229.

\bibitem[\protect\citeauthoryear{Jockusch, Lerman, Soare, and Solovay}{Jockusch
  et~al\mbox{.}}{1989}]{JLSS89}
{\sc Jockusch, Jr., C.~G.}, {\sc Lerman, M.}, {\sc Soare, R.~I.}, {\sc and}
  {\sc Solovay, R.~M.} 1989.
\newblock Recursively enumerable sets modulo iterated jumps and extensions of
  {A}rslanov's completeness criterion.
\newblock {\em J. Symbolic Logic\/}~{\em 54,\/}~4, 1288--1323.

\bibitem[\protect\citeauthoryear{Kjos-Hanssen, Merkle, and
  Stephan}{Kjos-Hanssen et~al\mbox{.}}{2006}]{KMS06}
{\sc Kjos-Hanssen, B.}, {\sc Merkle, W.}, {\sc and} {\sc Stephan, F.} 2006.
\newblock Kolmogorov complexity and the {R}ecursion {T}heorem.
\newblock In {\em STACS 2006}. Lecture Notes in Computer Science.
  Springer-Verlag, Berlin, Heidelberg, 149--161.

\bibitem[\protect\citeauthoryear{Kjos-Hanssen, Merkle, and
  Stephan}{Kjos-Hanssen et~al\mbox{.}}{2011}]{KMS11}
{\sc Kjos-Hanssen, B.}, {\sc Merkle, W.}, {\sc and} {\sc Stephan, F.} 2011.
\newblock Kolmogorov complexity and the {R}ecursion {T}heorem.
\newblock {\em Transactions of the American Mathematical Society\/}.
\newblock Posted on April 27, 2011, PII: S 0002-9947(2011)05306-7 (to appear in
  print).

\bibitem[\protect\citeauthoryear{Ku{\v{c}}era}{Ku{\v{c}}era}{1985}]{Kuc85}
{\sc Ku{\v{c}}era, A.} 1985.
\newblock Measure, {$\Pi^0_1$}-classes and complete extensions of {${\rm PA}$}.
\newblock In {\em Recursion theory week ({O}berwolfach, 1984)}. Lecture Notes
  in Math., vol. 1141. Springer, Berlin, 245--259.

\bibitem[\protect\citeauthoryear{Kummer}{Kummer}{1990}]{Kum90}
{\sc Kummer, M.} 1990.
\newblock An easy priority-free proof of a theorem of {F}riedberg.
\newblock {\em Theoret. Comput. Sci.\/}~{\em 74,\/}~2, 249--251.

\bibitem[\protect\citeauthoryear{Kurtz}{Kurtz}{1981}]{Kur81}
{\sc Kurtz, S.} 1981.
\newblock Randomness and genericity in the degrees of unsolvability.
\newblock Ph.D. thesis, University of Illinois at Urbana.

\bibitem[\protect\citeauthoryear{Levin}{Levin}{1974}]{Lev74}
{\sc Levin, L.~A.} 1974.
\newblock Laws of information conservation (nongrowth) and aspects of the
  foundation of probability theory.
\newblock {\em Problems of Information Transmission\/}~{\em 10,\/}~3, 206--210.

\bibitem[\protect\citeauthoryear{Li and Vit{\'a}nyi}{Li and
  Vit{\'a}nyi}{2008}]{LV08}
{\sc Li, M.} {\sc and} {\sc Vit{\'a}nyi, P.} 2008.
\newblock {\em An introduction to {K}olmogorov complexity and its
  applications\/}, Third ed.
\newblock Texts in Computer Science. Springer, New York.

\bibitem[\protect\citeauthoryear{Martin-L{\"o}f}{Martin-L{\"o}f}{1966}]{ML66}
{\sc Martin-L{\"o}f, P.} 1966.
\newblock The definition of random sequences.
\newblock {\em Information and Control\/}~{\em 9}, 602--619.

\bibitem[\protect\citeauthoryear{Nies}{Nies}{2009}]{N09}
{\sc Nies, A.} 2009.
\newblock {\em Computability and Randomness}.
\newblock Oxford University Press, Inc., New York, NY, USA.

\bibitem[\protect\citeauthoryear{Nies, Stephan, and Terwijn}{Nies
  et~al\mbox{.}}{2005}]{NST05}
{\sc Nies, A.}, {\sc Stephan, F.}, {\sc and} {\sc Terwijn, S.~A.} 2005.
\newblock Randomness, relativization and {T}uring degrees.
\newblock {\em J. Symbolic Logic\/}~{\em 70,\/}~2, 515--535.

\bibitem[\protect\citeauthoryear{Odifreddi}{Odifreddi}{1999}]{Odi99}
{\sc Odifreddi, P.~G.} 1999.
\newblock {\em Classical recursion theory. {V}ol. {I}{I}}.
\newblock Studies in Logic and the Foundations of Mathematics. North-Holland
  Publishing Co., Amsterdam.

\bibitem[\protect\citeauthoryear{Rogers}{Rogers}{1958}]{Rog58}
{\sc Rogers, Jr., H.} 1958.
\newblock G\"odel numberings of partial recursive functions.
\newblock {\em J. Symb. Logic\/}~{\em 23}, 331--341.

\bibitem[\protect\citeauthoryear{Schnorr}{Schnorr}{1971a}]{Sch71a}
{\sc Schnorr, C.-P.} 1971a.
\newblock A unified approach to the definition of random sequences.
\newblock {\em Math. Systems Theory\/}~{\em 5}, 246--258.

\bibitem[\protect\citeauthoryear{Schnorr}{Schnorr}{1971b}]{Sch71b}
{\sc Schnorr, C.-P.} 1971b.
\newblock {\em Zuf\"alligkeit und {W}ahrscheinlichkeit. {E}ine algorithmische
  {B}egr\"undung der {W}ahrscheinlichkeitstheorie}.
\newblock Lecture Notes in Mathematics, Vol. 218. Springer-Verlag, Berlin.

\bibitem[\protect\citeauthoryear{Schwarz}{Schwarz}{1982}]{Sch82}
{\sc Schwarz, S.} 1982.
\newblock Quotient lattices, index sets, and recursive linear orderings.
\newblock Ph.D. thesis, University of Chicago.

\bibitem[\protect\citeauthoryear{Soare}{Soare}{1987}]{Soa87}
{\sc Soare, R.~I.} 1987.
\newblock {\em Recursively enumerable sets and degrees}.
\newblock Perspectives in Mathematical Logic. Springer-Verlag, Berlin.
\newblock A study of computable functions and computably generated sets.

\bibitem[\protect\citeauthoryear{Stephan and Teutsch}{Stephan and
  Teutsch}{}]{ST10b}
{\sc Stephan, F.} {\sc and} {\sc Teutsch, J.}
\newblock Things that can be made into themselves.
\newblock Manuscript.

\bibitem[\protect\citeauthoryear{Wang}{Wang}{1996}]{Wan96}
{\sc Wang, Y.} 1996.
\newblock Randomness and complexity.
\newblock Ph.D. thesis, Mathematisch-Naturwissenschaftlichen Gesamtfakult\"at,
  Universit\"at Heidelberg.

\bibitem[\protect\citeauthoryear{Zvonkin and Levin}{Zvonkin and
  Levin}{1970}]{ZL70}
{\sc Zvonkin, A.~K.} {\sc and} {\sc Levin, L.~A.} 1970.
\newblock The complexity of finite objects and the development of the concepts
  of information and randomness by means of the theory of algorithms.
\newblock {\em Russian Mathematical Surveys\/}~{\em 25,\/}~6, 83--124.

\end{thebibliography}

\end{document}